\newcommand{\N}{\mathbb{N}}
\newcommand{\Z}{\mathbb{Z}}
\newcommand{\R}{\mathbb{R}}
\newcommand{\C}{\mathbb{C}}
\renewcommand{\t}{\tau}
\newcommand{\z}{\zeta}
\newcommand{\sgn}{\operatorname{sgn}}
\newcommand{\erf}{\operatorname{erf}}
\newcommand{\mat}[1]{\left(\begin{matrix}#1\end{matrix}\right)}
\renewcommand{\(}{\left\(}
\renewcommand{\)}{\right\)}
\numberwithin{equation}{section}
\theoremstyle{plain}
\newtheorem{theorem}{Theorem}[section]
\newtheorem{lemma}[theorem]{Lemma}
\newtheorem*{remark*}{Remark}
\newtheorem*{remarks*}{Remarks}
\newcommand{\lrb}[1]{\left(#1\right)}
\newtheorem{corollary}[theorem]{Corollary}
\newtheorem{proposition}[theorem]{Proposition}
\numberwithin{equation}{section}
\renewcommand{\binom}[2]{\left(\begin{smallmatrix}#1\\\\#2\end{smallmatrix}\right)}
\newcommand{\Pmod}[1]{\ ( \mathrm{mod} \, #1 )}
\newcommand{\Log}{\mathrm{Log}}
\setlist[enumerate]{leftmargin=*,label=\rm{(\arabic*)}}
\title{False and partial Eisenstein series\\ related to unimodal sequences}
\author[K. Bringmann]{Kathrin Bringmann}
\author[B. Pandey]{Badri Vishal Pandey}
\address{Department of Mathematics and Computer Science\\Division of Mathematics\\University of Cologne\\ Weyertal 86-90 \\ 50931 Cologne \\Germany}
\email{kbringma@math.uni-koeln.de}
\email{bpandey@uni-koeln.de, badrivishal9451@gmail.com}
\author{Jan-Willem van Ittersum}
\address{Department of Mathematics and Computer Science\\Division of Mathematics\\University of Cologne\\ Weyertal 86-90 \\ 50931 Cologne \\Germany}
\curraddr{Korteweg--de Vries Institute for Mathematics, University of Amsterdam, Postbus 94248, 1090 GE  Amsterdam, The Netherlands}
\email{j.w.m.vanittersum@uva.nl}
\subjclass[2020]{11F03, 11F11, 11F37, 11F50, 11P82}
\keywords{Eisenstein series, false theta functions, partial theta functions, unimodal sequences}
\begin{document}

\begin{abstract} 
	Motivated by the fact that the classical Jacobi theta function $\vartheta$ is the exponential generating function of the Eisenstein series, we study the exponential Taylor coefficients (in the elliptic variable) of a related natural partial theta function, as well as a false theta function related to the Dedekind eta function. We prove that the space spanned by these objects is closed under differentiation, analogous to the space of quasimodular forms, and that it contains the quasimodular forms themselves. We further provide their Fourier expansions, establish quasimodular completions, and derive a recursive formula for the Taylor coefficients of the logarithm of the unimodal rank generating function, expressed as partition traces of the false and partial objects.
\end{abstract}

\maketitle

\section{Introduction and statement of results}
The Jacobi theta function ($\z:=e^{2\pi i z}, q:=e^{2\pi i\tau}, z\in\C,\tau$ in the complex upper half plane $\mathbb H$) 
\begin{align}\label{def:Jacobi-theta-function}
	\vartheta(z;\tau):={i}\sum_{n\in\Z} (-1)^n q^{\frac{1}{2}\lrb{n+\frac12}^2}\z^{n+\frac12} = -iq^{\frac18} \z^{-\frac12} (q)_\infty (\z)_\infty (\z^{-1}q)_\infty,
\end{align}
where $(a)_n  := \prod_{j=0}^{n-1} (1-aq^j)$ for $n\in\N_0\cup\{\infty\}$,
and its generalizations serve as fundamental building blocks for elliptic functions and modular forms. Through the so-called theta decomposition they describe the structure of the space of Jacobi forms in terms of vector-valued modular forms. Furthermore, these functions appear in various areas, such as number theory, combinatorics, and mathematical physics, as well as partition theory and string theory (see \cite{GSW, Mumford, MumfordII, Nazaroglu13}).

The Taylor expansion of the classical Jacobi theta function is (up to a normalization) the exponential generating function of {\it Eisenstein series} \cite[(7)]{Zag91}, defined for weights $k\in\N$ by
\begin{align}\label{def:eisenstein}
	G_{k}(\tau):=\begin{cases}-\frac{B_k}{2k}+\sum_{n\ge1} \sigma_{k-1}(n)q^n & \text{if }2\mid k,\\
	0 & \text{otherwise,}\end{cases}
\end{align}
where $\sigma_{k-1}$ denotes the \emph{divisor function} $\sigma_{k-1}(n):=\sum_{d\mid n}d^{k-1}$. More precisely, we have 
\begin{align}\label{eq:vartheta-as-Eis}
	\vartheta(z;\tau)&=-2\pi z \eta^3(\tau)\exp\left(- 2\sum_{k\ge1} G_{k}(\tau)\frac{(2\pi iz)^k}{k!}\right).
\end{align}
In view of this, it is natural to study the exponential Taylor coefficients of false and partial theta functions introduced by Rogers \cite{Rog1917} and Ramanujan \cite{Ram1987}, respectively.
Define the partial and false theta functions $T$ and $h$ by\footnote{The definition of $h$ should be compared with the expansion of $\eta$ in the pentagonal number theorem. } 
\begin{align}
	T(z;\tau)&:=2i\sum_{n\ge0} (-1)^n \zeta^{n+\frac{1}{2}} q^{\frac{1}{2}\left(n+\frac{1}{2}\right)^2},\label{def:mathcalT}\\
	h(\z;q) &:= (1 - \zeta) \sum_{n\ge0} (-1)^n \zeta^{3n} q^{\frac{n(3n+1)}{2}}  \left(1 - \zeta^2 q^{2n+1}\right)\label{eq:def-G-H}\\
	&= \sum_{n\in\Z} (-1)^n \sgn \lrb{n+\frac{1}{2}} \left(\zeta^{\sgn\lrb{n+\frac{1}{2}}3n}-\zeta^{\sgn\lrb{n+\frac{1}{2}}(3n+1)}\right) q^{\frac{n(3n+1)}{2}}.\nonumber
\end{align}

Our aim is to study properties analogous to those of Eisenstein series for the exponential Taylor coefficients of $T$ and $h$.  That is, we investigate the sequences of functions $\{g_k\}_{k\in\N}$ and $\{h_k\}_{k\in\N}$, which we call {\it partial} and {\it false Eisenstein series}, respectively, defined by the identities,
\begin{align}
	T_0(\tau)\exp\left(- \sum_{k\ge 1}g_k(\tau)\frac{(2\pi iz)^k}{k!}\right) &:= {T(z;\tau)}, \qquad \left(T_0(\tau) := T(0;\tau)\right)\label{def:mathcalT-as-exp}\\
	-2i\sin(\pi z)q^{-\frac{1}{24}}\eta(\tau)\exp\left(-\sum_{k\ge 1}h_k(\tau)\frac{(2\pi iz)^k}{k!}\right)&:= h(\z;q).\label{def:mathcalH-as-exp}
\end{align}  
The functions $g_k$ and $h_k$ are neither (quasi)modular nor do they seem to admit (quasi)modular completions in the classical sense -- that is, the addition of a nontrivial function of $\tau$ and $\overline{\tau}$ to produce a modular object. Rather, we need to introduce an additional independent variable $w\in\mathbb{H}$ to define their completions. A function $\widehat{f}(\tau,\overline{\tau},w,\overline{w})$ is a (quasi)modular completion of $f$ if $\widehat{f}$ transforms (quasi)modularly under the simultaneous action $(\tau,w)\mapsto(\frac{a\tau+b}{c\tau+d},\,\frac{aw+b}{cw+d})$ and if $f$ is recovered from $\widehat{f}$ by taking an appropriate limit. In this paper, we have $f(\tau)=\lim_{t\to\infty}\widehat{f}(\tau,\overline{\tau},\tau+it+\varepsilon,\overline{\tau}-it+\varepsilon)$ for any $\varepsilon>0$ (see Subsection~\ref{sub:false-and-partial}). To state our result, for $\tau,w\in \mathbb{H}$ with $\tau\neq w$ and $\begin{psmallmatrix} a&b\\ c&d \end{psmallmatrix}\in\mathrm{SL}_2(\Z)$, define
\begin{equation}\label{def:chi-tau-w}
\chi_{\tau,w} \mat{a&b\\c&d} := \sqrt{\frac{i(w-\tau)}{(c\tau+d)(cw+d)}} \frac{\sqrt{c\tau+d}\sqrt{cw+d}}{\sqrt{i(w-\tau)}}\in\{1,-1\}.
\end{equation}
We let $\delta_{j,\ell}$ denote the Kronecker delta, i.e., $\delta_{j,\ell}=1$ if $j=\ell$ and $\delta_{j,\ell}=0$ otherwise.
\begin{theorem}\label{thm:comp-of-g_k}
	There exist functions $\widehat{g}_k(\tau,w)$ such that the following holds:
	\begin{enumerate}
		\item For $\gamma=\begin{psmallmatrix} a&b\\ c&d \end{psmallmatrix}\in\mathrm{SL}_2(\Z)$, we have
	\begin{align*}
		\widehat{g}_{k} \lrb{\frac{a\tau+b}{c\tau+d},\frac{aw+b}{cw+d}} &=  \chi^{k}_{\tau,w} (\gamma) (c\tau+d)^k   \widehat{g}_{k}(\tau,w) + \delta_{k,2}\frac{ic}{2\pi}(c\tau+d) .
	\end{align*}
	\item For any $\varepsilon>0$, we have
\begin{align*}
	\lim_{t\to\infty} \widehat{g}_{k}(\tau,\tau+it+\varepsilon) = g_k(\tau) .
\end{align*}
	\end{enumerate}
\end{theorem}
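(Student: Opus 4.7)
I propose to deduce Theorem~\ref{thm:comp-of-g_k} from a suitable completion $\widehat{T}(z;\tau,w)$ of the partial theta function $T(z;\tau)$, mirroring the way \eqref{eq:vartheta-as-Eis} encodes the (quasi)modularity of the Eisenstein series $G_k$ in the modular transformation of the Jacobi theta function $\vartheta$. Taking as input the completion $\widehat{T}$ constructed in Subsection~\ref{sub:false-and-partial}, define $\widehat{g}_k(\tau,w)$ by the logarithmic Taylor expansion
\begin{align*}
	\widehat{T}(z;\tau,w) = \widehat{T}(0;\tau,w)\exp\lrb{-\sum_{k\ge 1}\widehat{g}_k(\tau,w)\frac{(2\pi iz)^k}{k!}},
\end{align*}
in direct analogy with \eqref{def:mathcalT-as-exp}. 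Part (2) of the theorem then follows immediately: the defining limit $\widehat{T}(z;\tau,\tau+it+\varepsilon)\to T(z;\tau)$ as $t\to\infty$ (uniformly for $z$ in a neighborhood of $0$), combined with the continuous dependence of exponential Taylor coefficients on a parameter, yields $\widehat{g}_k(\tau,\tau+it+\varepsilon)\to g_k(\tau)$.

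For part (1), the essential input is the automorphy of $\widehat{T}$ under the diagonal $\mathrm{SL}_2(\Z)$-action $(\tau,w)\mapsto(\gamma\tau,\gamma w)$, which I expect to take the shape
\begin{align*}
	\widehat{T}\lrb{\frac{\chi_{\tau,w}(\gamma)\,z}{c\tau+d};\frac{a\tau+b}{c\tau+d},\frac{aw+b}{cw+d}} = (c\tau+d)^{\frac12}\,\mu(\gamma,w)\,e^{\frac{\pi i c z^2}{c\tau+d}}\,\widehat{T}(z;\tau,w),
\end{align*}
where $\mu(\gamma,w)$ is a $z$-independent multiplier absorbing the $w$-dependence of the automorphy factor, and where the sign $\chi_{\tau,w}(\gamma)$ acts on the elliptic variable to account for the $\sqrt{i(w-\tau)}$-ambiguity in the non-holomorphic completion. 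Taking $-\log$ of both sides and matching powers of $z$, the $z^0$-part determines the transformation of $\widehat{T}(0;\tau,w)$; the quadratic phase $e^{\pi icz^2/(c\tau+d)}$ contributes only at $z^2$ and produces the anomaly $\delta_{k,2}\cdot\frac{ic(c\tau+d)}{2\pi}$ by the same direct computation that yields the quasimodularity of $G_2$; and matching the coefficient of $z^k$ for every $k\ge1$, using $\chi_{\tau,w}(\gamma)^2=1$, gives $\widehat{g}_k(\gamma\tau,\gamma w)=\chi^k_{\tau,w}(\gamma)(c\tau+d)^k\widehat{g}_k(\tau,w)$ together with the Kronecker-delta correction at $k=2$, completing part (1).

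\textbf{Main obstacle.} The central difficulty lies in verifying the automorphy of $\widehat{T}$ in precisely the form displayed above. In particular, one must track the $\sqrt{i(w-\tau)}$-factor appearing in the non-holomorphic completion through the change of variables $(\tau,w)\mapsto(\gamma\tau,\gamma w)$, using the identity $\gamma w-\gamma\tau=(w-\tau)/((c\tau+d)(cw+d))$, and identify the resulting sign ambiguity with $\chi_{\tau,w}(\gamma)$ as in \eqref{def:chi-tau-w}; one must also check that the quadratic phase in $z$ is exactly $e^{\pi icz^2/(c\tau+d)}$, with no further contribution from the non-holomorphic piece. Once this input is in hand, the remainder of the argument is routine bookkeeping with power series.
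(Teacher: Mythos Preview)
Your overall strategy matches the paper's: define $\widehat{T}(z;\tau,w):=\vartheta(z;\tau)+\widehat{\psi}(z;\tau,w)$, establish its Jacobi-like automorphy (this is Proposition~\ref{prop:Thattransfo}, and your displayed formula is equivalent to it after substituting $z\mapsto\chi_{\tau,w}(\gamma)z$), and then read off the transformation of the logarithmic Taylor coefficients; the limit argument for part~(2) is likewise as you describe.

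There is, however, a genuine gap in your definition of $\widehat{g}_k$ via a \emph{holomorphic} Taylor expansion in $z$. The completion $\widehat{\psi}(z;\tau,w)$ depends on $z_2=\mathrm{Im}(z)$ through the error-function argument, so $\widehat{T}$ is only real-analytic in $z$, not holomorphic; a power series in $2\pi iz$ alone cannot represent it, and there is no well-defined ``coefficient of $z^k$'' to match. The paper addresses this by expanding in both $z$ and $\overline{z}$,
\[
\widehat{T}(z;\tau,w)=\widehat{T}(0;\tau,w)\exp\!\left(-\sum_{\substack{k\ge1\\ \ell\ge0}}\widehat{g}_{k,\ell}(\tau,w)\,\frac{(2\pi iz)^k}{k!}\,\frac{(2\pi i\overline{z})^\ell}{\ell!}\right),
\]
proving a transformation law for all $\widehat{g}_{k,\ell}$ (Theorem~\ref{thm:t-n-modular-trans}; note that under $z\mapsto z/(c\tau+d)$ one has $\overline{z}\mapsto\overline{z}/(c\overline{\tau}+d)$, so the bidegree $(k,\ell)$ is preserved), showing $\lim_{t\to\infty}\widehat{g}_{k,\ell}(\tau,\tau+it+\varepsilon)=\delta_{\ell,0}\,g_k(\tau)$ (Proposition~\ref{limitbehaviour}), and finally setting $\widehat{g}_k:=\widehat{g}_{k,0}$. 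Once you insert this double expansion, the rest of your bookkeeping goes through unchanged.
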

For $h_k$, we have the following result on their completions.
\begin{theorem}\label{thm:comp-of-h_k}
	There exist functions $\widehat{h}_k(\tau,w)$ such that the following holds:
	\begin{enumerate}
		\item For $\gamma=\begin{psmallmatrix} a&b\\ c&d \end{psmallmatrix}\in\Gamma_0(3)$, we have
	\begin{align*}
		\widehat{h}_{k} \lrb{\frac{a\tau+b}{c\tau+d},\frac{aw+b}{cw+d}} &= \chi^{k}_{\tau,w}  (\gamma) (c\tau+d)^k \widehat{h}_{k}(\tau,w)  +\delta_{k,2} \frac{3 i c}{2 \pi  }(c\tau +d).
	\end{align*}
	\item For any $\varepsilon>0$, we have
\begin{align*}
	\lim_{t\to\infty} \widehat{h}_{k}(\tau,\tau+it+\varepsilon) =h_k(\tau).
\end{align*}
	\end{enumerate}
\end{theorem}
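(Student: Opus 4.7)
The strategy parallels the proof of \Cref{thm:comp-of-g_k}. The plan is to construct a completion $\widehat h(z;\tau,w)$ of the full two-variable false theta $h(\zeta;q)$ that transforms like a Jacobi form under $\Gamma_0(3)$, and then pass to the Taylor coefficients via \eqref{def:mathcalH-as-exp}.

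First I would use the symmetric expression for $h$ in the second line of \eqref{eq:def-G-H}, writing it as a sum over $n\in\Z$ weighted by $\sgn(n+\tfrac12)$. Replacing this signum by a Nazaro\u glu-type error-function kernel depending on the auxiliary variable $w\in\mathbb H$ produces a non-holomorphic completion $\widehat h(z;\tau,w)$. This kernel is designed so that it reduces to $\sgn(n+\tfrac12)$ in the limit $w\to\tau+i\infty$ along $\Re(w-\tau)=\varepsilon>0$, yielding
\[
\lim_{t\to\infty}\widehat h(z;\tau,\tau+it+\varepsilon)=h(\zeta;q)
\]
uniformly in $z$ on a neighborhood of $z=0$. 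The Jacobi modular transformation of $\widehat h$ under the simultaneous action $(\tau,w,z)\mapsto(\gamma\tau,\gamma w,z/(c\tau+d))$, $\gamma\in\Gamma_0(3)$, is then proved either by Poisson summation applied to the completed sum or by invoking the general Nazaro\u glu framework for false Jacobi theta functions; the level $3$ is forced by the quadratic form $\tfrac12 n(3n+1)$ and the elliptic exponents $3n,3n+1$ in \eqref{eq:def-G-H}. The resulting automorphy factor features both the multiplier $\chi_{\tau,w}(\gamma)$ of \eqref{def:chi-tau-w} and an exponential factor of the form $\exp(A\,cz^2/(c\tau+d))$ arising from completing the square.

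Next I would define $\widehat h_k(\tau,w)$ as the Taylor coefficients in $z$ obtained by the completed version of \eqref{def:mathcalH-as-exp}, and take logarithms of both sides of the Jacobi transformation law from the previous step. The $z$-dependence on the right-hand side enters through three pieces: the rescaling $-2i\sin(\pi z/(c\tau+d))$ back to $-2i\sin(\pi z)$ (which contributes $z$-polynomial corrections), the Jacobi exponential factor $\exp(A\,cz^2/(c\tau+d))$, and the weight-$\tfrac12$ modular factor from $\eta(\tau)$. Matching coefficients of $(2\pi iz)^k/k!$ then yields item~(1); the additive shift vanishes at all orders except $k=2$, where it is entirely concentrated in the $cz^2/(c\tau+d)$ term. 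Item~(2) is an immediate consequence of the limit in the first step together with the fact that extracting Taylor coefficients in $z$ commutes with the limit $t\to\infty$.

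The main obstacle will be identifying the precise constant $\frac{3ic}{2\pi}$ at $k=2$. The factor $3$ traces back to the cubic structure of the exponents in \eqref{eq:def-G-H}, in contrast to the factor $1$ in \Cref{thm:comp-of-g_k}, which comes from the purely quadratic structure of \eqref{def:mathcalT}. Carefully tracking the contributions of the $\eta$-prefactor in \eqref{def:mathcalH-as-exp} (weight $\tfrac12$, as opposed to the weight-$\tfrac32$ coming from $\eta^3$ in \eqref{eq:vartheta-as-Eis}), the $\sin(\pi z)$ factor, and the Jacobi cocycle in $z$ through the logarithm expansion is what determines this constant, and also what confirms that no additional shifts arise at orders $k\neq 2$.
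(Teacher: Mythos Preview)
Your overall strategy matches the paper's, but there is one genuine gap and one misidentified mechanism.

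The gap concerns holomorphicity in $z$. The Bringmann--Nazaroglu error-function kernel that completes $\sgn(n+\tfrac12)$ necessarily carries a term $z_2/\tau_2$ in its argument (this is what makes the elliptic and modular transformations work), so the completed object is \emph{not} holomorphic in $z$. Consequently you cannot simply ``take Taylor coefficients in $z$'' of the completed version of \eqref{def:mathcalH-as-exp} and match powers of $(2\pi iz)^k$. The paper deals with this by expanding the completion $\widehat H(z;\tau,w)$ in both $(2\pi iz)^k/k!$ and $(2\pi i\bar z)^\ell/\ell!$, obtaining a double family $\widehat h_{k,\ell}(\tau,w)$; the modular transformation then reads $\widehat h_{k,\ell}\mapsto \chi_{\tau,w}^{k+\ell}(c\tau+d)^k(c\bar\tau+d)^\ell\,\widehat h_{k,\ell}$ plus the $(k,\ell)=(2,0)$ shift, and one \emph{defines} $\widehat h_k:=\widehat h_{k,0}$. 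The limit statement also requires showing $\lim_{t\to\infty}\widehat h_{k,\ell}=\delta_{\ell,0}h_k$, not just the $\ell=0$ case. Without the $\bar z$-expansion your coefficient comparison is not well-posed.

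The misidentified mechanism is the $\sin(\pi z)$ factor: it does \emph{not} produce polynomial corrections under $z\mapsto z/(c\tau+d)$. The paper passes to $\widehat{\mathfrak H}(z;\tau,w):=\frac{2i\zeta^{1/2}q^{1/24}}{1-\zeta}\widehat H(z;\tau,w)$, which absorbs the $\sin(\pi z)$ completely, and then proves a clean Jacobi transformation for $\widehat{\mathfrak H}$ on $\Gamma_0(3)$ with automorphy factor $\nu(\gamma)\sqrt{c\tau+d}\,e^{3\pi icz^2/(c\tau+d)}$ and argument shift $z\mapsto\chi_{\tau,w}(\gamma)z$. The entire $k=2$ anomaly $\frac{3ic}{2\pi}(c\tau+d)$ comes from the single term $e^{3\pi icz^2/(c\tau+d)}$ after taking logarithms; no other piece contributes. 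Incidentally, rather than directly replacing $\sgn$ in \eqref{eq:def-G-H}, the paper reaches the completion via the identity $h(\zeta;q)=\frac{i}{2}(\zeta^{1/2}-\zeta^{-1/2})q^{1/8}\sum_\pm\mp\zeta^{\pm1}T(3z\pm\tau;3\tau)$ and the already-established completion $\widehat T=\vartheta+\widehat\psi$; this makes the index-$\tfrac32$ cocycle and the level-$3$ congruence condition transparent.
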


Recall that the algebra generated over $\mathbb{C}$ by the Eisenstein series  is closed under differentiation (see \eqref{eq:der-of-Eis}). Recently, the authors introduced a new class of ``mock-Eisenstein'' series arising from the rank generating function and proved that these objects are likewise closed under differentiation (see \cite[Theorem~1.2 (3)]{BPvI25}). Rausch \cite{R25} then constructed a family of ``mock-Eisenstein'' series associated with so-called $k$-ranks, which also satisfy a closedness property for each $k\ge2$. In this paper, we show that the algebra generated by the functions $g_k$ (and $h_k$) enjoys the same property. More precisely, let  $\widetilde{\mathcal M}:=\C[G_2,G_4,G_6]$ be the space of quasimodular forms and
\[
\mathcal{T}:=\mathbb{C} \left[g_1,g_2,g_3,\ldots\right],
\quad \mathcal{H}:=\mathbb{C} \left[ h_1,h_2,h_3\ldots \right], \quad\text{and} \quad 
D:=q\frac{\partial}{\partial q}.
\]
\begin{theorem}\label{thm:mathbbT-algebra}
	The space $\mathcal{T}$ is closed under the action of $D$. Moreover, for $k\in \N$, we have
	\begin{equation*}
		D(g_k) = \frac{g_{k+2}}{2} - \frac12\sum_{d=0}^{k} \binom{k}{d}g_{d+1} g_{k-d+1},\qquad
		D\lrb{\Log \lrb{T_0}} = -\frac{g_2}{2} +\frac{g_1^2}{2}.
	\end{equation*}
\end{theorem}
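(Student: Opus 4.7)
The plan is to derive both the recursion for $D(g_k)$ and the formula for $D(\Log T_0)$ simultaneously, by comparing two power-series expansions of $\Log T(z;\tau)$ in $z$ together with their images under $D=q\partial_q$. The key input is a heat-type equation for $T$. Setting $\mathcal{D} := \tfrac{1}{2\pi i}\partial_z$ and differentiating the series in \eqref{def:mathcalT} termwise, the elementary identities
$$D\, q^{\frac{1}{2}(n+\frac{1}{2})^2} = \tfrac{1}{2}(n+\tfrac{1}{2})^2 q^{\frac{1}{2}(n+\frac{1}{2})^2}, \qquad \mathcal{D}^2 \zeta^{n+\frac{1}{2}} = (n+\tfrac{1}{2})^2 \zeta^{n+\frac{1}{2}}$$
yield
$$DT(z;\tau) = \tfrac{1}{2}\mathcal{D}^2 T(z;\tau).$$

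Next, take $\Log$ of \eqref{def:mathcalT-as-exp} (legitimate near $z=0$ and $\tau=i\infty$, where $T_0(\tau) = 2iq^{1/8}(1+O(q))\neq 0$) and apply $D$ to both sides to obtain
$$\frac{DT}{T} = D(\Log T_0) - \sum_{k\ge 1}D(g_k)\frac{(2\pi i z)^k}{k!}.$$
Combining the heat equation with the elementary identity $\mathcal{D}^2 T/T = \mathcal{D}^2(\Log T) + (\mathcal{D}\Log T)^2$ rewrites the left-hand side as $\tfrac{1}{2}\mathcal{D}^2(\Log T) + \tfrac{1}{2}(\mathcal{D}\Log T)^2$. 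From \eqref{def:mathcalT-as-exp} one reads off
$$\mathcal{D}(\Log T) = -\sum_{k\ge 1}g_k\frac{(2\pi i z)^{k-1}}{(k-1)!}, \qquad \mathcal{D}^2(\Log T) = -\sum_{k\ge 0}g_{k+2}\frac{(2\pi i z)^k}{k!},$$
while the Cauchy product gives
$$(\mathcal{D}\Log T)^2 = \sum_{k\ge 0}\frac{(2\pi i z)^k}{k!}\sum_{d=0}^{k}\binom{k}{d}g_{d+1}g_{k-d+1}.$$

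Comparing coefficients of $(2\pi i z)^k/k!$ on both sides then finishes the proof: the constant term ($k=0$) gives $D(\Log T_0) = -\tfrac{g_2}{2}+\tfrac{g_1^2}{2}$, while matching the coefficient of $(2\pi iz)^k/k!$ for $k\ge 1$ produces the claimed recursion. Closure of $\mathcal{T}$ under $D$ is then immediate, since each $D(g_k)$ lies in $\mathbb{C}[g_1,\ldots,g_{k+2}]\subset\mathcal{T}$. The whole argument is essentially formal once the heat equation is in place; the heat equation itself follows by direct termwise differentiation and is the only computational step, so I do not anticipate any real obstacle.
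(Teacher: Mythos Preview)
Your proof is correct and follows essentially the same route as the paper: both derive the heat equation $H_{\frac12}(T)=0$ (your $DT=\tfrac12\mathcal{D}^2T$ is exactly this, since $\mathcal{D}=\zeta\partial_\zeta$), expand $T$ via \eqref{def:mathcalT-as-exp}, and compare Taylor coefficients in $z$. The only cosmetic difference is that you take $\Log$ before differentiating whereas the paper differentiates $T$ directly and then divides by $T$; the resulting computations are identical.
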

Furthermore the space  $\mathcal{H}$ is a differential algebra extending the space of quasimodular forms:
\begin{theorem}\label{thm:h-n-diff}
	The space $\mathcal{H}$ is closed under the action of $D$. Moreover, for $k\in \N$, we have
	\begin{align*}
		D(h_k) &= \frac{h_{k+2}}{6} -\frac{1}{6}\sum_{d=0}^k \binom{k}{d} h_{d+1}h_{k-d+1},\quad
		\frac{h_2}{6} - \frac{h_1^2}{6} = G_2.
	\end{align*}
Further, we have $\mathcal{H} =\widetilde{\mathcal M}[h_1,h_3,h_5,\ldots]$. 
\end{theorem}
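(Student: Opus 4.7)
My plan is to parallel the proof of Theorem~\ref{thm:mathbbT-algebra}: derive a heat-equation-like PDE relating $Dh$ to $z$-derivatives of $h$, then extract the recursion by logarithmic differentiation of the defining identity \eqref{def:mathcalH-as-exp}. The complication compared to the partial-theta case is the prefactor $(1-\zeta)$ in $h$; I would first strip it off by writing $h(\zeta;q)=(1-\zeta)S(\zeta;q)$ with
\[
S(\zeta;q):=\sum_{n\ge 0}(-1)^n\bigl(\zeta^{3n}q^{n(3n+1)/2}-\zeta^{3n+2}q^{(n+1)(3n+2)/2}\bigr).
\]
A direct check shows every monomial $\zeta^\alpha q^\beta$ in $S$ satisfies $\beta=\alpha(\alpha+1)/6$, so with $\partial:=\tfrac{1}{2\pi i}\partial_z$ one has the clean relation $DS=\tfrac{1}{6}(\partial^2+\partial)S$. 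Transferring this identity across $(1-\zeta)$ via the Leibniz rule and using the elementary identities $(1+\zeta)/(1-\zeta)=i\cot(\pi z)$ and $2\zeta/(1-\zeta)^2=-\tfrac12\csc^2(\pi z)$ produces the PDE
\[
Dh=\frac{\partial^2 h}{6}+\frac{i\cot(\pi z)}{6}\,\partial h-\frac{\csc^2(\pi z)}{12}\,h.
\]

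Next, I would take the logarithm of \eqref{def:mathcalH-as-exp}, set $F:=\sum_{k\ge 1}h_k(2\pi iz)^k/k!$, and use $\partial\log(-2i\sin\pi z)=-\tfrac{i}{2}\cot(\pi z)$ together with $D\eta/\eta=-G_2$ to obtain closed expressions for $\partial\log h$, $\partial^2\log h$, and $D\log h$ in terms of $F$, its $\partial$-derivatives, and trigonometric factors. Substituting $\partial h/h=\partial\log h$ and $\partial^2 h/h=\partial^2\log h+(\partial\log h)^2$ into the PDE above, all trigonometric terms cancel via the Pythagorean identity $\csc^2(\pi z)-\cot^2(\pi z)=1$, leaving
\[
-G_2-\sum_{k\ge 1}Dh_k\frac{(2\pi iz)^k}{k!}=-\frac{1}{6}\sum_{k\ge 0}h_{k+2}\frac{(2\pi iz)^k}{k!}+\frac{1}{6}\Biggl(\sum_{k\ge 0}h_{k+1}\frac{(2\pi iz)^k}{k!}\Biggr)^{\!2}.
\]
Comparing the constant term yields $6G_2=h_2-h_1^2$, while the coefficient of $(2\pi iz)^k/k!$ for $k\ge 1$ gives the stated recursion for $Dh_k$, which in particular proves closure of $\mathcal{H}$ under $D$.

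For the identification $\mathcal{H}=\widetilde{\mathcal{M}}[h_1,h_3,h_5,\ldots]$ I would combine $G_2\in\mathcal{H}$ with closure under $D$ and the Ramanujan identities (which express $G_4,G_6$ polynomially in $G_2,DG_2,D^2G_2$) to obtain $\widetilde{\mathcal M}\subseteq\mathcal H$. The reverse inclusion is proved by induction on $k\ge 1$ showing $h_{2k}\in \widetilde{\mathcal M}[h_1,h_3,\ldots,h_{2k-1}]$: the base case $h_2=6G_2+h_1^2$ is immediate, and for the inductive step the recursion rearranged as
\[
h_{2k+2}=6\,Dh_{2k}+\sum_{d=0}^{2k}\binom{2k}{d}h_{d+1}h_{2k-d+1}
\]
places $h_{2k+2}$ in $\widetilde{\mathcal M}[h_1,h_3,\ldots,h_{2k+1}]$, since all products have indices $\le 2k+1$ (even-index factors reducible by induction) while $Dh_{2k}$ is computed from the already-established recursion applied to the generators of $\widetilde{\mathcal M}[h_1,h_3,\ldots,h_{2k-1}]$.

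The main obstacle is the heat-equation step: because $h$ itself mixes $q$-exponents of shape $\alpha(\alpha+1)/6$ and $\alpha(\alpha-1)/6$, no direct heat equation on $h$ is available, and the detour through $S$ followed by a careful conjugation is essential. The subsequent substitution is delicate in that the $\csc^2(\pi z)$ and $\cot^2(\pi z)$ contributions coming from the PDE and from $\partial^2\log(-2i\sin\pi z)$ must collapse exactly (via the Pythagorean identity) to produce the clean constant $-\tfrac1{24}-G_2$ on the logarithmic-derivative side; any sign or normalization slip in these trigonometric computations would destroy the purely algebraic recursion.
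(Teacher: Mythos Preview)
Your proof is correct and follows the same strategy as the paper---a heat-type PDE combined with logarithmic differentiation of \eqref{def:mathcalH-as-exp}---and the induction for $\mathcal{H}=\widetilde{\mathcal M}[h_1,h_3,h_5,\ldots]$ is likewise the same. The only tactical difference is in the normalization: instead of stripping just the factor $(1-\zeta)$ to get your $S$, the paper multiplies $h$ by the full prefactor $\tfrac{2iq^{1/24}\zeta^{1/2}}{1-\zeta}$ to form $\mathfrak{H}$, which by \eqref{def:mathcalH-as-exp} equals exactly $2i\eta(\tau)\exp(-F)$. This choice has two payoffs: $\mathfrak{H}$ is annihilated by the \emph{standard} heat operator $H_{3/2}=6D-(\zeta\partial_\zeta)^2$ (your relation $6DS=(\partial^2+\partial)S$ becomes this after conjugating by $q^{1/24}\zeta^{1/2}$), and---since the trigonometric prefactor has already been absorbed---the logarithmic differentiation involves no $\cot$ or $\csc$ terms at all, so the cancellation you carefully arrange via $\csc^2-\cot^2=1$ never needs to be performed. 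Your route works but carries extra bookkeeping that the paper's normalization sidesteps.
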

\begin{remarks*}\hspace{0cm} 	
\begin{enumerate}
	\item  By Theorem~\ref{thm:h-n-diff}, the space of quasimodular forms for $\mathrm{SL}_2(\Z)$ is contained in~$\mathcal{H}$, even though the completions of their generators~$h_k$ have quasi completions which transform on~$\Gamma_0(3)$ (see Theorem~{\rm \ref{thm:comp-of-h_k}}).
	\item  Numerical computations suggest that $\mathcal{T}$ is contained in the free algebra $\widetilde{M}[g_1,g_2,g_4,$ $g_6,\ldots]$. Moreover, it seems that $\mathcal{H}$ is freely generated by $G_2, G_4, G_6$, and $h_k$ for $k$ odd.
\end{enumerate}  
\end{remarks*}

Given the Eisenstein series like properties of the partial (resp.\ false) Eisenstein series $g_k$ (resp.\ $h_k$), it is natural to ask whether their Fourier expansions have a similar shape. We show that this is indeed the case. For this, we recall that the $n$-th Fourier coefficient of $G_{k}$, defined in \eqref{def:eisenstein}, is the sum of the $(k-1)$-th powers of the divisors of $n$. Here we show that the $n$-th Fourier coefficient of $g_k$ (resp.\ $h_k$) is an integer linear combination of $(k-1)$-th powers of integers in some specified range depending on $n$. 
Letting $\ell(\lambda):=\sum_{j=1}^n m_j$ denote the {\it length} of a partition $\lambda$, define the set\footnote{Throughout the paper, we use the notation $\lambda = (1^{m_1}, 2^{m_2}, \dots, n^{m_n}) \vdash n$ to denote a partition of $n$, where $m_j$ is the multiplicity of $j$.}
\begin{align*}
\Lambda(n,m):=\left\{ \lambda\vdash n \colon m_1\geq m_2\geq \ldots \geq m_n \geq 0,   \ell(\lambda)=m\right\}.
\end{align*}
\begin{theorem}\label{thm:Fourier-exp-of-g_k}
	For $k\in\N$, we have
	\begin{align*}
g_k(\tau) = {-\frac{\delta_{k,1}}{2}}+\sum_{n\ge1} \sum_{m=1}^n a_{n,m} {m^{k-1}} q^n,\quad \text{and} \quad T_0(\tau)= 2iq^{\frac{1}{8}}\exp\left( -\sum_{n\ge1} \sum_{m=1}^n \frac{a_{n,m}}{m} q^n\right).
	\end{align*}
	Here $a_{n,m}\in \Z$ and $a_{n,m}=0$ if $m<\lceil \tfrac{1}{2} (\sqrt{8n+1}-1) \rceil$. In fact, we have	
	\begin{align*}
		a_{n,m}:=\sum_{\lambda\in\Lambda(n,m)} (-1)^{m+m_1}\frac {m}{m_{1}}  \binom{m_1 }{m_1-m_2,m_2-m_3,\ldots}.
	\end{align*}
\end{theorem}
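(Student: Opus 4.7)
The plan is to reduce both claims to computing the Taylor expansion at $u=0$ (with $u:=2\pi i z$) of $\log F(u;\tau)$, where
\[
F(u;\tau):=\sum_{n\ge 0}(-1)^n q^{\frac{n(n+1)}{2}}e^{nu}
\]
is the partial theta series appearing in $T(z;\tau)=2iq^{\frac18}e^{u/2}F(u;\tau)$, so in particular $T_0(\tau)=2iq^{\frac18}F(0;\tau)$. Taking the logarithm of \eqref{def:mathcalT-as-exp} and reading off the coefficients of $u^k$ yields
\[
g_k(\tau)=-\tfrac{\delta_{k,1}}{2}-\partial_u^k\log F(u;\tau)\big|_{u=0}\quad(k\ge1),\qquad \log T_0(\tau)-\log\bigl(2iq^{\frac18}\bigr)=\log F(0;\tau),
\]
so the entire theorem reduces to the formal Taylor series of $\log F$.

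Next I would apply the Mercator series $\log(1+X)=\sum_{j\ge1}(-1)^{j+1}X^j/j$ to $X:=F-1=\sum_{n\ge1}(-1)^n q^{n(n+1)/2}e^{nu}$ (legitimate as a formal $q$-series since $X$ has $q$-valuation $\ge 1$), and then expand $X^j$ by the multinomial theorem. Grouping each $j$-tuple $(n_1,\ldots,n_j)\in\N^j$ by its value-multiplicities $r_\ell:=\#\{i:n_i=\ell\}$ produces
\[
\log F(u;\tau)=\sum_{(r_\ell)\ne 0}\frac{(-1)^{j+1+\sum_\ell \ell r_\ell}}{j}\binom{j}{r_1,r_2,\ldots}q^{\sum_\ell\binom{\ell+1}{2}r_\ell}e^{(\sum_\ell\ell r_\ell)u},
\]
where $j=\sum_\ell r_\ell$ and the outer sum runs over sequences $(r_\ell)_{\ell\ge 1}$ of non-negative integers that are eventually zero and not all zero. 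The decisive step is to recognise that $r_\ell=m_\ell-m_{\ell+1}$ (equivalently $m_\ell=\sum_{k\ge\ell}r_k$) is a bijection between these sequences and partitions $\lambda=(1^{m_1},2^{m_2},\ldots)$ with non-increasing multiplicities, i.e.\ $\lambda\in\bigsqcup_{n,m}\Lambda(n,m)$; a one-line Abel-summation calculation turns the three relevant statistics into $m_1=j$, $\sum_\ell\ell r_\ell=\ell(\lambda)$, $\sum_\ell\binom{\ell+1}{2}r_\ell=|\lambda|$, and the multinomial into $\binom{m_1}{m_1-m_2,m_2-m_3,\ldots}$.

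Taking $\partial_u^k$ at $u=0$ pulls down $m^k$ from $e^{mu}$ (with $m=\ell(\lambda)$); peeling off one factor of $m$ and substituting the bijection gives
\[
-\partial_u^k\log F(u;\tau)\Big|_{u=0}=\sum_{n,m\ge1}\Biggl(m\!\!\sum_{\lambda\in\Lambda(n,m)}\frac{(-1)^{m+m_1}}{m_1}\binom{m_1}{m_1-m_2,m_2-m_3,\ldots}\Biggr)m^{k-1}q^n,
\]
and the parenthesised quantity is exactly $a_{n,m}$; this proves the formula for $g_k$. Substituting $u=0$ directly (without the derivative, and without the extracted factor of $m$) gives $\log F(0;\tau)=-\sum_{n,m\ge1}(a_{n,m}/m)q^n$, and exponentiating produces the claimed expression for $T_0(\tau)$.

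The support bound and integrality are then short. For the former one shows $\Lambda(n,m)=\emptyset$ whenever $m<\cei{\tfrac12(\sqrt{8n+1}-1)}$: since the per-unit ratio $\binom{\ell+1}{2}/\ell=(\ell+1)/2$ is increasing in $\ell$, maximising $\sum_\ell\binom{\ell+1}{2}r_\ell$ subject to $\sum_\ell\ell r_\ell=m$ is attained at $r_m=1$ (all other $r_\ell=0$), giving $n\le m(m+1)/2$. Integrality follows termwise from the identity
\[
\frac{m}{m_1}\binom{m_1}{n_1,n_2,\ldots}=\sum_{j\,:\,n_j\ge1}j\binom{m_1-1}{n_1,\ldots,n_j-1,\ldots}\qquad\Bigl(n_j:=m_j-m_{j+1},\ m_1=\sum_j n_j,\ m=\sum_j jn_j\Bigr),
\]
a one-line factorial manipulation, which exhibits each summand of $a_{n,m}$ as an integer. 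The main obstacle, insofar as there is one, is the bijective bookkeeping in the second paragraph: one must spot that the unordered-tuple sum produced by the logarithmic series is naturally indexed by $\bigsqcup_{n,m}\Lambda(n,m)$ via $r_\ell\leftrightarrow m_\ell-m_{\ell+1}$, and then check that all three statistics (the top of the multinomial, the $u$-weight, and the $q$-weight) translate correctly; once this reparametrization is in place the rest of the proof is automatic.
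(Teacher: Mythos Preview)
Your proof is correct and follows essentially the same route as the paper: both expand the logarithm of the normalized partial theta series and identify the resulting sum with $\Lambda(n,m)$ via the bijection $r_\ell\leftrightarrow m_\ell-m_{\ell+1}$ (the paper phrases this as partitions into triangular numbers and applies Fa\`a di Bruno in~$q$, whereas you use the Mercator series directly). Your arguments for the support bound and for integrality are shorter than the paper's, which invokes Lemmas~\ref{lem:div} and~\ref{lem:ineq}, but the underlying strategy is the same.
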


We have a similar result for $h_k$ with 
\begin{equation*}
\Omega(n,m):=\left\{ \lambda\vdash n \colon m_{3j-2}\ge m_{3j+1},     m_{3j-1}\ge m_{3j+2},     m_{3j}=0 \text{ for all }j\in\N,   3\ell(\lambda)=m+m_1 \right\}.
\end{equation*}

\begin{theorem}\label{thm:Fourier-exp-of-h_k}
	For $k\in\N$, we have
	\begin{align*}
		h_k(\tau) &= -\frac{\delta_{k,1}}{2} + \sum_{n\geq 1} \sum_{m=1}^{2n} b_{n,m} m^{k-1} q^n, \quad\text{and}\quad \eta(\tau)=q^{\frac{1}{24}}\exp\left(-\sum_{n\geq 1} \sum_{m=1}^{2n} \frac{b_{n,m}}{m} q^n\right)
	\end{align*}
with $b_{n,m}\in \Z$ and $b_{n,m}=0$ if $m< \lfloor \frac{1}{2} (\sqrt{24 n+1}-1)\rfloor$. In fact, we have 
	\begin{align*}
		b_{n,m}:=\sum_{\lambda\in\Omega(n,m)} (-1)^{m+m_2}\frac{m}{m_1+m_2}\binom{m_1+m_2}{m_2-m_5,m_5-m_8,\ldots m_1-m_4,m_4-m_7,\ldots}.
	\end{align*}
\end{theorem}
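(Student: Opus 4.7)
\noindent\textbf{Proof plan for Theorem~\ref{thm:Fourier-exp-of-h_k}.}
The plan is to reduce both claims of the theorem to a single formal identity in $q$ and $\zeta$, and then verify that identity by expanding a logarithm combinatorially and matching terms via an explicit bijection with the partitions in $\Omega(n,m)$.

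Write $S(\zeta;q):=h(\zeta;q)/(1-\zeta)$. Using $-2i\sin(\pi z)=\zeta^{-1/2}(1-\zeta)$ and the pentagonal number theorem in the form $S(1;q)=(q;q)_\infty=q^{-1/24}\eta(\tau)$, the relation \eqref{def:mathcalH-as-exp} becomes $\zeta^{1/2}S(\zeta;q)/(q;q)_\infty=\exp\bigl(-\sum_{k\ge 1}h_k(\tau)(2\pi iz)^k/k!\bigr)$. Taking logarithms and using $\sum_{k\ge 1}m^{k-1}(2\pi iz)^k/k!=(\zeta^m-1)/m$, one sees that the claimed Fourier expansion $h_k=-\delta_{k,1}/2+\sum_{n,m}b_{n,m}m^{k-1}q^n$ is equivalent to
\begin{equation*}
\log\frac{S(\zeta;q)}{(q;q)_\infty}=\sum_{n,m\ge 1}\frac{b_{n,m}}{m}(1-\zeta^m)q^n, \qquad (\star)
\end{equation*}
while the claimed exponential formula for $\eta(\tau)$ is the $\zeta=0$ specialization of $(\star)$, since $S(0;q)=1$.

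Next, re-indexing the subtracted sum in \eqref{eq:def-G-H} yields
\begin{equation*}
S(\zeta;q)=1+\sum_{k\ge 1}(-1)^k\bigl(\zeta^{3k-1}q^{k(3k-1)/2}+\zeta^{3k}q^{k(3k+1)/2}\bigr),
\end{equation*}
with $(q;q)_\infty$ recovered at $\zeta=1$. Applying $\log(1+X)=\sum_{j\ge 1}(-1)^{j-1}X^j/j$ to $X=S(\zeta;q)-1$ and to $X|_{\zeta=1}$, and expanding $X^j$ by the multinomial theorem, gives
\begin{equation*}
\log\frac{S(\zeta;q)}{(q;q)_\infty}=\sum_{(\mu^A_k,\mu^B_k)_{k\ge 1}}\frac{(-1)^{j+\ell+1}}{j}\binom{j}{\mu^A_1,\mu^B_1,\mu^A_2,\mu^B_2,\ldots}(\zeta^m-1)q^n,
\end{equation*}
summed over non-zero finitely supported non-negative integer sequences, where $j:=\sum_k(\mu^A_k+\mu^B_k)$, $\ell:=\sum_k k(\mu^A_k+\mu^B_k)$, $n:=\sum_k(k(3k-1)\mu^A_k+k(3k+1)\mu^B_k)/2$, and $m:=\sum_k((3k-1)\mu^A_k+3k\mu^B_k)$.

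The final step identifies this with the right-hand side of $(\star)$ through the bijection $\mu^A_k=m_{3k-2}-m_{3k+1}$, $\mu^B_k=m_{3k-1}-m_{3k+2}$ between the sequences $(\mu^A,\mu^B)$ above and partitions $\lambda\in\Omega(n,m)$. The conditions defining $\Omega(n,m)$ encode precisely non-negativity of $(\mu^A,\mu^B)$ together with $m_{3j}=0$, while telescoping yields $j=m_1+m_2$ and $\ell=\ell(\lambda)$. Short computations then show $n=|\lambda|$, $m=3\ell(\lambda)-m_1$, and $(-1)^{j+\ell+1}=(-1)^{m+m_2+1}$, so that multiplying by $m$ and comparing multinomial coefficients recovers $b_{n,m}$ exactly as defined in the theorem, establishing $(\star)$. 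The main obstacle is this last bookkeeping, particularly the telescoping identities and the sign identity $(-1)^{m+m_2}=(-1)^{j+\ell}$, both of which rely on the structural constraint $m_{3j}=0$. The analogous statement for $g_k$ (Theorem~\ref{thm:Fourier-exp-of-g_k}) is proved by the same method, with $S$ replaced by $T_0(\tau)/(2iq^{1/8})$ and $\Omega(n,m)$ by $\Lambda(n,m)$.
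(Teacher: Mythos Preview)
Your reduction to the identity $(\star)$ and the ensuing combinatorial expansion via $\log(1+X)$ and the multinomial theorem is exactly the paper's strategy (the paper phrases it through Fa\`a di Bruno applied to $q\mapsto\Log(\mathtt H(\zeta;q))$, but this is the same computation), and your bijection $\mu^A_k=m_{3k-2}-m_{3k+1}$, $\mu^B_k=m_{3k-1}-m_{3k+2}$ is precisely the inverse of the paper's map $\mathscr P(n)\to\mathscr H(n)$. So the derivation of the explicit formula for $b_{n,m}$, the Fourier expansion of $h_k$, and the $\eta$-formula is correct and matches the paper.

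However, your plan omits two assertions that are part of the theorem and that the paper proves separately. First, integrality: the summand defining $b_{n,m}$ has the denominator $m_1+m_2$, and one must check that $(m_1+m_2)\mid m\binom{m_1+m_2}{m_1-m_4,m_2-m_5,\ldots}$. The paper does this via the divisibility lemma for multinomials (Lemma~\ref{lem:div}), observing that $\gcd(m_1-m_4,m_2-m_5,\ldots)$ divides $m$. Second, the vanishing for small $m$: the claim $b_{n,m}=0$ when $m<\lfloor\tfrac12(\sqrt{24n+1}-1)\rfloor$ amounts to showing $\Omega(n,m)=\varnothing$ in that range, and this requires a nontrivial inequality argument (the paper uses Lemma~\ref{lem:ineq} and a case split according to which pentagonal interval contains $n$). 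Neither of these follows from $(\star)$ alone, so your plan as stated does not yet cover the full statement.
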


The algebra of functions $g_k$ and $h_k$ can be interpreted in terms of functions coming from unimodal sequences, defined as follows.
A \emph{unimodal sequence of size $n\in \N$} has the form\footnote{A sequence of integers, such as $(2,2)$, may give rise to multiple unimodal sequences; we use an underline on $c$ to denote that it is the peak of the unimodal sequence.}
\begin{align*}
	1\le a_1 \le \cdots \le a_{r} \le \underline{c} \ge b_{1} \ge \cdots \ge b_s\ge1,
\end{align*}
for integers $a_1,\ldots,a_r,c,b_1,\ldots,b_s$
with $r,s\in\N_0$ and $n=c+\sum_{j=1}^{r}a_j+\sum_{j=1}^{s} b_j$. Let $u(n)$ count the number of unimodal sequences of size $n$. The \emph{rank} of a unimodal sequence is $s-r$. By convention, the empty sequence is the unique unimodal sequence of size zero; it is of rank zero. Let $u(n,m)$ denote the number of unimodal sequences of size $n$ and rank $m$.
The generating function for the rank of unimodal sequences \cite[(2.2)]{KL} is given by ($(a,b)_n := (a)_n(b)_n$)
\begin{equation*}
	U(\zeta;q) := \sum_{\substack{n\ge0\\m\in\Z}} u(n,m) \zeta^m q^n = \sum_{n\ge0} \frac{q^n}{\left(\zeta q, \zeta^{-1}q\right)_n}.
\end{equation*}
We look at the exponential Taylor expansion of the unimodal ranks. Namely, we write
\begin{align}
	U(\zeta;q) &= : \frac{\sin(\pi z)}{\pi z}U(1;q) \exp \lrb{ 2\sum_{k\ge 1} u_k(\tau) \frac{(2\pi i z)^k}{k!}} . \label{eq:U-as-exp}
\end{align}

Motivated by the partition Eisenstein traces recently defined by Amdeberhan, Griffin, Ono, and Singh \cite{Traces}, the authors defined in \cite{BPvI25}, for a sequence of functions $f=\{f_k\}_{k\in\N}$ and $n\in\N_0$, the \emph{$n$-th partition trace} with respect to $f$ and a function~$\phi$ on partitions as 
\begin{equation*}
	\mathrm{Tr}_n(\phi,f;\t):=\sum_{\lambda\vdash n} \phi(\lambda)f_\lambda(\t),
\end{equation*}
where the sum ranges over all partitions of $n$ and, for $\lambda\vdash n$, we set
\begin{align*}
	f_\lambda(\t) := \prod_{j=1}^n f_j^{m_j}(\t).
\end{align*}
We show that $u_k$ can be recursively expressed as partition traces of the functions $g_k$ and~$h_k$. Let 
\[\mathcal{U}:=\mathcal{H}[g_1,g_2,g_4,g_6,\ldots].\]

\begin{theorem}\label{thm:recursive-formula-for-u-k}
	For $k\in \N$, we have $u_k\in\mathcal{U}$. In fact, we have 
	\begin{align}\label{eq:u_k}
	u_k(\tau)  &= -\frac{k!}{2} \sum_{\substack{\lambda\vdash k\\ \lambda\neq (k^1)}} \phi(\lambda) u_\lambda(\tau)+ \mathrm{Tr}_k(\phi,\gamma;\tau) + k! g_1(\tau)\mathrm{Tr}_{k-1}(\psi,h;\tau),
	\end{align}
	where $u:=\{u_k\}_{k\in\N},   \gamma:=\{G_k-2^{k-1}g_k\}_{k\in\N},   h:=\{h_k\}_{k\in\N}$, and, for $\lambda\vdash k$, we let
\begin{align*}
	\phi(\lambda) := \prod_{j=1}^k \frac{2^{m_j}}{j!^{m_j}m_j!},\qquad \psi(\lambda) := \prod_{j=1}^k \frac{(-1)^{m_j}}{j!^{m_j}m_j!}.
\end{align*}
Moreover, the space $\mathcal{U}$ is closed under the action of $D$.
\end{theorem}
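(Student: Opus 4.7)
My plan is to derive \eqref{eq:u_k} from a functional identity in the elliptic variable $z$ that expresses the rank generating function $U(\zeta;q)$ in terms of the partial theta $T(z;\tau)$, the false theta $h(\zeta;q)$, the Jacobi theta $\vartheta(z;\tau)$, the Dedekind eta $\eta(\tau)$, and the single extra function $g_1(\tau)$. Once such an identity is in place, extracting the $(2\pi iz)^k$-coefficient on both sides---unwound by the exponential Taylor expansions \eqref{eq:vartheta-as-Eis}, \eqref{def:mathcalT-as-exp}, \eqref{def:mathcalH-as-exp}, and \eqref{eq:U-as-exp}---produces the recursion. Concretely, $\mathrm{Tr}_k(\phi,\gamma)$ arises as the $(2\pi iz)^k$-coefficient of $T(2z;\tau)(-2\pi z\eta^3(\tau))/(T_0(\tau)\vartheta(z;\tau))$, whose logarithm equals $2\sum_k\gamma_k(2\pi iz)^k/k!$ with $\gamma_k=G_k-2^{k-1}g_k$ (the factor $2^{k-1}$ tracing the doubling $T(z)\mapsto T(2z)$); the term $k!\,g_1(\tau)\mathrm{Tr}_{k-1}(\psi,h)$ is the $(2\pi iz)^k$-coefficient of $g_1(\tau)(2\pi iz)h(\zeta;q)q^{1/24}/(-2i\sin(\pi z)\eta(\tau))$; and $\mathrm{Tr}_k(\phi,u)$ on the left encodes \eqref{eq:U-as-exp} via $\phi((k^1))=2/k!$.

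The main technical obstacle is the derivation of the underlying $q$-series identity. I would attack it by multiplying $U(\zeta;q)$ by the theta factor $(\zeta q,\zeta^{-1}q)_\infty = i\vartheta(z;\tau)q^{-1/8}\zeta^{1/2}/((1-\zeta)(q)_\infty)$ to convert the $q$-hypergeometric $\sum_{n\ge0}q^n/((\zeta q,\zeta^{-1}q)_n)$ into a Hecke--Rogers--type double sum via a suitable Bailey pair, and then splitting the resulting double sum into two pieces identifiable (up to rescaling) with $T(2z;\tau)$ and with $h(\zeta;q)$ respectively; a single residual scalar correction then matches $g_1(\tau)$.

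Granted the recursion, the containment $u_k\in\mathcal{U}$ follows by strong induction on $k$, run together with the companion claim $g_j\in\mathcal{U}$ for every $j\ge1$. The base cases give $u_1=-g_1\in\mathcal{U}$ and $u_2=G_2-2g_2-2g_1h_1\in\mathcal{U}$ (using $G_2=(h_2-h_1^2)/6\in\mathcal{H}\subseteq\mathcal{U}$ from Theorem~\ref{thm:h-n-diff}). Inductively, \eqref{eq:u_k} expresses $u_k$ as a polynomial in $u_j$ $(j<k)$, $h_j$, $G_j$, and $g_j$ for $j\le k$; the only appearance of $g_k$ itself is the isolated summand $-(2^k/k!)g_k$ coming from $\phi((k^1))\gamma_k$ in $\mathrm{Tr}_k(\phi,\gamma)$, so $u_k\in\mathcal{U}$ is equivalent to $g_k\in\mathcal{U}$. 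For $k\in\{1,2,4,6,\ldots\}$ this is immediate by definition of $\mathcal{U}$; for odd $k\ge3$ I would prove $g_k\in\mathcal{U}$ by extracting $g_k$ directly from the $q$-series identity of the first step, exhibiting it as an element of $\widetilde{\mathcal M}[g_1,g_2,g_4,g_6,\ldots]\subseteq\mathcal{U}$.

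Finally, closure of $\mathcal{U}$ under $D$ follows from the Leibniz rule, upon checking $D$ on generators. Theorem~\ref{thm:h-n-diff} gives $Dh_k\in\mathcal{H}\subseteq\mathcal{U}$ for every $k$, and Theorem~\ref{thm:mathbbT-algebra} gives $Dg_k = g_{k+2}/2 - \tfrac12\sum_{d=0}^k\binom{k}{d}g_{d+1}g_{k-d+1}$. Combined with the companion containment $g_j\in\mathcal{U}$ for all $j\ge1$ just established (equivalently, $\mathcal{T}\subseteq\mathcal{U}$), every summand lies in $\mathcal{U}$, and hence so does $Dg_k$. This closes the argument.
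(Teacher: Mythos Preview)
Your overall strategy coincides with the paper's: the functional identity in $z$, the expansion of exponentials via the cycle index (Lemma~\ref{lem:cycle-index}), and the reduction of $D$-closure to $\mathcal{T}\subseteq\mathcal{U}$ are all as in the paper's proof. The identity you plan to derive via Bailey pairs and a Hecke--Rogers splitting is precisely Proposition~2.1 of Kim--Lovejoy~\cite{KL},
\[
U(\zeta;q)=\frac{i\sin(\pi z)\,\eta(\tau)}{q^{1/24}\,\vartheta(z;\tau)}\,T(2z;\tau)+h(\zeta;q),
\]
so no new $q$-series work is needed; the scalar $g_1=-i\eta^3/T_0$ then falls out by differentiating at $z=0$.

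There is, however, one genuine gap. Your assertion that $g_k\in\mathcal{U}$ for odd $k\ge3$ ``by extracting $g_k$ directly from the $q$-series identity'' has no mechanism behind it: the identity by itself only relates $g_k$ to the $u_j$, $G_j$, $h_j$, and other $g_j$, and without further input this does not place $g_k$ in $\mathcal{U}$. The paper closes this by exploiting the scalar relation $g_1=-i\eta^3/T_0$: logarithmic differentiation together with $D(\Log\eta)=-G_2$ and Theorem~\ref{thm:mathbbT-algebra} gives
\[
D(g_1)=g_1\Bigl(-3G_2+\tfrac12 g_2-\tfrac12 g_1^2\Bigr)\in\mathcal{U},
\]
and then the recursion $g_{k+2}=2D(g_k)+\sum_{d}\binom{k}{d}g_{d+1}g_{k-d+1}$ from Theorem~\ref{thm:mathbbT-algebra} climbs inductively through the odd indices. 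An alternative, closer in spirit to your sketch, is to use that $z\mapsto U(e^{2\pi iz};q)$ is even, so $u_k=0$ for all odd $k$; feeding this back into \eqref{eq:u_k} then solves $g_k$ in terms of lower-weight data already known to lie in $\mathcal{U}$. You never invoke this evenness---indeed your base case $u_1=-g_1$ contradicts it (the correct value is $u_1=0$), and your formula for $u_2$ is missing the $+2g_1^2$ contribution from $\phi((1^2))\gamma_1^2$ in $\mathrm{Tr}_2(\phi,\gamma)$.
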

\begin{remark*}
Note that $u_k=0$ if $k$ is odd, since $U(e^{2\pi i z};q)$ is an even function of $z$. Hence, for $k$ odd equation~\eqref{eq:u_k} gives a relation between $G_\ell, g_\ell$, and $h_\ell$ for $\ell\leq k$. 
\end{remark*}
As a direct corollary of Theorems~\ref{thm:comp-of-g_k}, \ref{thm:comp-of-h_k}, and \ref{thm:recursive-formula-for-u-k}, we recursively give a quasi-completion of $u_k$. 
 More precisely, define the sequence of functions $\widehat{u}:=\{\widehat{u}_k\}_{k\in\N}$ by
\begin{align*}
	\widehat{u}_k(\tau,w) &:= -\frac{k!}{2}\sum_{\substack{\lambda\vdash k\\ \lambda\neq (k^1)}} \phi(\lambda) \widehat{u}_\lambda(\tau,w)+\frac{k!}{2} \mathrm{Tr}_k\left(\phi,\widehat{\gamma};\tau,w\right) + k!   \widehat{g}_1(\tau,w)\mathrm{Tr}_{k-1}\left(\psi,\widehat{h};\tau,w\right),
	\end{align*}
	where $\widehat{\gamma}:=\{\widehat{G}_k-2^{k-1}\widehat{g}_k\}_{k\in\N},   \widehat{h}:=\{\widehat{h}_k\}_{k\in\N}$, and $\phi$ and $\psi$ are defined in Theorem~\ref{thm:recursive-formula-for-u-k}. 
\begin{corollary}\label{cor:u-k-hat} 
	The functions $\widehat{u}_k(\tau,w)$ transform quasimodular of weight $(k,0)$ for the simultaneous action of $\Gamma_0(3)$ on $\tau$ and $w$. Furthermore, for any $\varepsilon>0$, we have
	\begin{align*}
		\lim_{t\to\infty} \widehat{u}_k(\tau,\tau+it+\varepsilon)=u_k(\tau).
	\end{align*}
\end{corollary}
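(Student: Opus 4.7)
The plan is to prove both parts of Corollary~\ref{cor:u-k-hat} by induction on $k$, leveraging the fact that the recursive definition of $\widehat{u}_k$ precisely mirrors the identity for $u_k$ established in Theorem~\ref{thm:recursive-formula-for-u-k}. The crucial observation enabling induction is that every partition $\lambda\vdash k$ with $\lambda\neq(k^1)$ consists of parts strictly less than $k$, so on the right-hand side of the defining recursion the term $\widehat{u}_\lambda$ depends only on $\widehat{u}_j$ with $j<k$. Both the limit statement and the quasimodular transformation law can therefore be verified inductively from the already established properties of $\widehat{g}_k$, $\widehat{h}_k$, and the (standard) completion $\widehat{G}_k$.

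For part (2) I would apply part (2) of Theorems~\ref{thm:comp-of-g_k} and~\ref{thm:comp-of-h_k} term-by-term, together with the analogous limit for $\widehat{G}_k$, to each factor in the recursion defining $\widehat{u}_k$. Since finite sums and products commute with pointwise limits, and since the induction hypothesis handles the $\widehat{u}_\lambda$-terms for $\lambda\neq(k^1)$, the right-hand side of the definition of $\widehat{u}_k(\tau,\tau+it+\varepsilon)$ converges as $t\to\infty$ to the right-hand side of~\eqref{eq:u_k}, which equals $u_k(\tau)$ by Theorem~\ref{thm:recursive-formula-for-u-k}.

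For part (1) the central algebraic fact is that for any partition $\lambda=(1^{m_1},\ldots,k^{m_k})\vdash k$, the monomial $\widehat{f}_\lambda=\prod_j \widehat{f}_j^{m_j}$ has total weight $\sum_j j m_j=k$. Hence, modulo lower-order cocycle corrections, each summand in the recursion transforms with weight $k$ and multiplier $\chi_{\tau,w}^k(\gamma)$ on $\Gamma_0(3)$: this applies to $\widehat{u}_\lambda$ by induction, to $\mathrm{Tr}_k(\phi,\widehat{\gamma};\tau,w)$ since each $\widehat{\gamma}_j=\widehat{G}_j-2^{j-1}\widehat{g}_j$ transforms with weight $j$ and multiplier $\chi_{\tau,w}^j(\gamma)$, and to $\widehat{g}_1(\tau,w)\,\mathrm{Tr}_{k-1}(\psi,\widehat{h};\tau,w)$ since $1+(k-1)=k$. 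Multiplicativity $\chi_{\tau,w}^{j}(\gamma)\chi_{\tau,w}^{\ell}(\gamma)=\chi_{\tau,w}^{j+\ell}(\gamma)$ (which holds because $\chi_{\tau,w}(\gamma)\in\{\pm1\}$) ensures that the multipliers assemble correctly across products.

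The main obstacle will be the bookkeeping of quasimodular corrections coming from the $\delta_{j,2}$ cocycle terms in Theorems~\ref{thm:comp-of-g_k} and~\ref{thm:comp-of-h_k} and the analogous correction for $\widehat{G}_2$: whenever a factor $\widehat{g}_2$, $\widehat{h}_2$, or $\widehat{G}_2$ appears in $\widehat{f}_\lambda$, its transformation introduces a piece of the form $\tfrac{ic}{2\pi}(c\tau+d)$ (with the appropriate rational constant from the two theorems) that must be tracked through the multinomial expansion of $\widehat{f}_\lambda$. I would carry out this bookkeeping inductively and verify that all such lower-weight contributions assemble into the single cocycle governing a quasimodular transformation of weight $(k,0)$. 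The consistency of this cancellation is in fact already forced by Theorem~\ref{thm:recursive-formula-for-u-k} at the level of the holomorphic limit, and the completions have been designed so that this cancellation lifts to an identity of quasimodular objects in the two variables $\tau$ and $w$.
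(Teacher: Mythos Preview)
Your proposal is correct and matches the paper's approach: the paper presents this result as an immediate consequence of Theorems~\ref{thm:comp-of-g_k}, \ref{thm:comp-of-h_k}, and \ref{thm:recursive-formula-for-u-k} and gives no separate proof, so your inductive argument---showing that the recursive definition of $\widehat{u}_k$ inherits both the quasimodular transformation and the limiting behavior from those theorems---is precisely the reasoning the paper leaves implicit. The bookkeeping of the $\delta_{j,2}$ cocycle terms that you flag is not carried out explicitly in the paper either; it is absorbed into the general principle that sums and products of functions transforming quasimodularly of weights summing to $k$ (with multipliers $\chi_{\tau,w}^j$ multiplying to $\chi_{\tau,w}^k$) again transform quasimodularly of weight $(k,0)$.
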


The paper is organized as follows. In Section~\ref{Sec:preliminaries}, we recall some combinatorial lemmas 
and some basic properties of the Dedekind eta function, (false) theta functions, and quasimodular forms. In Section~\ref{sec:False and partial Eisenstein series} we prove Theorems~\ref{thm:comp-of-g_k} and \ref{thm:comp-of-h_k}, in Section~\ref{sec:differential-eq}, Theorems~\ref{thm:mathbbT-algebra} and \ref{thm:h-n-diff}, in Section~\ref{sec:Fourier-exp}, Theorems~\ref{thm:Fourier-exp-of-g_k} and \ref{thm:Fourier-exp-of-h_k}, and in Section~\ref{sec:unimodal} Theorem~\ref{thm:recursive-formula-for-u-k}. In Section~\ref{sec:examples} we present some examples. Finally, in Section~\ref{sec:questions}, we raise some questions for future research.

\section*{Acknowledgements}
The first and the second author have received funding from the European Research Council (ERC) under the European Union's Horizon 2020 research and innovation programme (grant agreement No. 101001179), and the first and the third author are supported by the SFB/TRR 191 “Symplectic Structure in Geometry, Algebra and Dynamics”, funded by the DFG (Projektnummer 281071066 TRR 191).
We thank William Keith and Caner Nazoroglu for useful discussions.

\section{Preliminaries}\label{Sec:preliminaries}

\subsection{Combinatorial lemmas}
We require a lemma regarding the divisibility of multinomial coefficients, which follows directly from B\'ezout's Lemma.\footnote{This lemma was also used in \cite{BPvI25}.}
\begin{lemma}\label{lem:div}
	For $a_1,\ldots,a_\ell\in\N$ with $\sum_{j=1}^\ell a_j=n$, we have
	\[
	\frac{n}{\gcd(a_1,\ldots,a_\ell)}  \Big|  \binom{n}{a_1,a_2,\ldots,a_\ell}.
	\]
\end{lemma}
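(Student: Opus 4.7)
The plan is to exploit the classical integrality identity
\[
\frac{a_j}{n} \binom{n}{a_1,\ldots,a_\ell} = \binom{n-1}{a_1,\ldots,a_j-1,\ldots,a_\ell} \in \Z,
\]
valid for each $j$ with $a_j \geq 1$, obtained by cancelling one factor of $a_j$ against $n$ in $\frac{n!}{a_1!\cdots a_\ell!}$. This shows that for every $j$, the integer $n$ divides $a_j \binom{n}{a_1,\ldots,a_\ell}$.

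Next, I would set $d := \gcd(a_1,\ldots,a_\ell)$ and apply B\'ezout's Lemma to obtain integers $x_1,\ldots,x_\ell$ with $\sum_{j=1}^{\ell} x_j a_j = d$. Multiplying both sides by $\binom{n}{a_1,\ldots,a_\ell}/n$ and using Step 1 termwise yields
\[
\frac{d}{n}\binom{n}{a_1,\ldots,a_\ell} = \sum_{j=1}^{\ell} x_j \cdot \frac{a_j}{n}\binom{n}{a_1,\ldots,a_\ell} \in \Z,
\]
since each summand on the right-hand side is a product of an integer $x_j$ with the integer $\binom{n-1}{a_1,\ldots,a_j-1,\ldots,a_\ell}$. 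Therefore $n \mid d\binom{n}{a_1,\ldots,a_\ell}$.

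Finally, since $d \mid a_j$ for every $j$, we have $d \mid \sum_j a_j = n$, so $n/d$ is a positive integer. Dividing the divisibility relation $n \mid d \binom{n}{a_1,\ldots,a_\ell}$ by $d$ gives $\frac{n}{d} \mid \binom{n}{a_1,\ldots,a_\ell}$, which is the claim. No step here is a real obstacle; the only thing to be careful about is handling the case where some $a_j$ might equal $0$ (in which case that index contributes trivially to both the B\'ezout relation and the gcd, and may simply be discarded), and the edge case $\ell=1$ where $d=a_1=n$ and the statement is trivial.
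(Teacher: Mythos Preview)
Your proof is correct and is precisely the argument the paper has in mind: it states only that the lemma ``follows directly from B\'ezout's Lemma,'' and your use of the identity $\frac{a_j}{n}\binom{n}{a_1,\ldots,a_\ell}=\binom{n-1}{a_1,\ldots,a_j-1,\ldots,a_\ell}$ together with a B\'ezout combination is the standard way to make this precise. Note that since the hypothesis has $a_j\in\N$ (positive integers), the $a_j=0$ caveat is unnecessary here.
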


We also need the following elementary lemma.
\begin{lemma}\label{lem:ineq}
Let $n_1\geq n_2\geq \ldots \geq n_r\geq 0$, $a_1,\ldots,a_r, b_1,\ldots,b_r\in \R$, and assume that for all $s\in\{1,\ldots,r\},$ we have $\sum_{j=1}^s a_j \le \sum_{j=1}^s b_j$. Then
\[
\sum_{j=1}^r a_j n_j \leq \sum_{j=1}^r b_j n_j.
\]
\end{lemma}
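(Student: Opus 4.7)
The plan is to apply summation by parts (Abel summation), which is the natural tool whenever one wants to upgrade a pointwise inequality on partial sums to an inequality against a monotone weighting sequence.

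First, I would introduce the partial sums $A_s := \sum_{j=1}^s a_j$ and $B_s := \sum_{j=1}^s b_j$ with the convention $A_0 = B_0 = 0$. Writing $a_j = A_j - A_{j-1}$ and rearranging, Abel's summation formula gives
\[
\sum_{j=1}^r a_j n_j \;=\; A_r n_r + \sum_{j=1}^{r-1} A_j\, (n_j - n_{j+1}),
\]
and likewise for the $b_j$ with $B_s$ in place of $A_s$. Subtracting the two identities yields
\[
\sum_{j=1}^r (b_j - a_j) n_j \;=\; (B_r - A_r) n_r + \sum_{j=1}^{r-1} (B_j - A_j)(n_j - n_{j+1}).
\]

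At this point every factor on the right is nonnegative: by hypothesis $B_s - A_s \geq 0$ for each $s$, the monotonicity $n_j \geq n_{j+1}$ makes each difference $n_j - n_{j+1}$ nonnegative, and finally $n_r \geq 0$. Hence the right-hand side is nonnegative, which is exactly the claimed inequality.

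There is essentially no obstacle; the only point that requires a little care is noting that the assumption $n_r \geq 0$ is used (not merely the monotonicity), since the boundary term $(B_r - A_r)n_r$ would otherwise be uncontrolled. The proof is purely formal and independent of the rest of the paper, so it can be stated in a few lines immediately after the statement of the lemma.
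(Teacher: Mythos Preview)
Your proof is correct; Abel summation is the standard argument here, and your observation that the hypothesis $n_r\ge 0$ is genuinely needed for the boundary term is accurate. The paper itself does not supply a proof, merely labeling the lemma as ``elementary,'' so your write-up fills exactly that gap.
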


Next we recall the Fa\`{a} di Bruno formula which gives ($n\in\N$)
\begin{align}
	\hspace{-0.25cm}{d^{n} \over dz^{n}}f(g(z)) =\sum_{\lambda \vdash n} {\frac {n!}{m_{1}!  m_{2}!  \cdots   m_{n}!}} f^{(m_{1}+\cdots +m_{n})}(g(z)) \prod _{j=1}^{n}\left({\frac {g^{(j)}(z)}{j!}}\right)^{m_{j}}.\label{eq:Fa-di-bruno}
\end{align}

Finally we use a result on Pólya’s cycle index polynomials \cite{S1999}.
\begin{lemma}[Example 5.2.10 of \cite{S1999}]\label{lem:cycle-index}
	We have, as a formal power series in $w$,
	\begin{equation*}
		\sum_{{k\ge0}}\sum_{\lambda\vdash k}\prod_{j=1}^k\frac{x_j^{m_j}}{m_j!}w^k=\exp\left(\sum_{{k\ge1}}x_k w^k\right).
	\end{equation*}
\end{lemma}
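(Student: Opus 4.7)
\textbf{Proof proposal for Lemma~\ref{lem:cycle-index}.} The plan is to compute the right-hand side directly by exploiting that $\exp$ of a sum is a product of exponentials, and then to match terms combinatorially with partitions.

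First I would write, as formal power series in $w$ with coefficients in $\C[x_1,x_2,\ldots]$,
\begin{equation*}
\exp\!\left(\sum_{k\geq 1}x_k w^k\right) = \prod_{k\geq 1}\exp\!\left(x_k w^k\right) = \prod_{k\geq 1}\sum_{m_k\geq 0}\frac{x_k^{m_k}}{m_k!}w^{k m_k}.
\end{equation*}
This manipulation is legitimate because $x_k w^k$ has no constant term in $w$, and for any fixed $N\in\N$ only finitely many factors in the infinite product contribute to the coefficient of $w^N$.

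Next I would expand the infinite product into a sum over tuples $(m_1,m_2,\ldots)$ of nonnegative integers with finite support, obtaining
\begin{equation*}
\sum_{(m_1,m_2,\ldots)}\prod_{j\geq 1}\frac{x_j^{m_j}}{m_j!}  w^{\sum_{j\geq 1} j m_j}.
\end{equation*}
I would then group terms by the total degree $k:=\sum_{j\geq 1} j m_j$ in $w$. For each $k\in\N_0$, the tuples contributing are in bijection with partitions $\lambda\vdash k$ via $\lambda=(1^{m_1},2^{m_2},\ldots,k^{m_k})$, where the entries $m_j$ with $j>k$ must all vanish. Since $m_j=0$ for $j>k$ contributes a factor of $1$ to the product, I can replace $\prod_{j\geq 1}$ with $\prod_{j=1}^k$. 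This yields exactly the left-hand side
\begin{equation*}
\sum_{k\geq 0}\sum_{\lambda\vdash k}\prod_{j=1}^k\frac{x_j^{m_j}}{m_j!}w^k,
\end{equation*}
which completes the argument.

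There is no real obstacle here beyond bookkeeping: the statement is essentially the exponential formula, and the only subtle point is the formal-series justification for the product rearrangement, which is handled by the observation that every term of weight $\le N$ involves only finitely many of the variables $x_k$ with $k\le N$. Once this is noted, the proof reduces to the combinatorial identification between sequences $(m_j)_{j\geq 1}$ with $\sum j m_j = k$ and partitions of $k$.
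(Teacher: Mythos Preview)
Your proof is correct. The paper does not give its own proof of this lemma but simply cites Example~5.2.10 of \cite{S1999}; your argument is precisely the standard one found there, namely writing the exponential of the sum as a product of exponentials, expanding each factor, and identifying finitely-supported multiplicity sequences with partitions.
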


\subsection{The Dedekind eta function and (false) theta functions}\label{sub:false-and-partial}
First, we recall the multiplier for the \emph{Dedekind eta function} $\eta(\tau):=q^{\frac{1}{24}}\prod_{n\ge}(1-q^n)$, 
	\begin{align*}
\nu_\eta\! \begin{pmatrix}a&b\\c&d\end{pmatrix}:=\frac{\eta\Bigl(\frac{a\t+b}{c\t+d}\Bigr)}{ \sqrt{c\tau+d}\,\eta(\t)},\qquad\left(\begin{pmatrix} a&b \\ c&d \end{pmatrix}\in\mathrm{SL}_2(\Z)\right),
\end{align*}
which does not depend on the choice of $\tau$. 
Then we have the following transformation of~$\vartheta$. 
\begin{lemma}\label{lem:vartheta-eta-trans}
	For $\gamma=\begin{psmallmatrix} a&b \\ c&d \end{psmallmatrix}\in\mathrm{SL}_2(\Z)$ and $m,n\in\Z$, we have
	\begin{align*}
\vartheta \left(\frac{z}{c\tau+d};\frac{a\tau+b}{c\tau+d}\right) &= \nu_\eta^3 (\gamma) \sqrt{c\tau+d}   e^{\frac{\pi icz^2}{c\tau+d}} \vartheta(z;\tau),\\ 
\vartheta(z+m\tau+n;\tau) &= (-1)^{m+n}q^{-\frac{m^2}{2}}\zeta^{-m} \vartheta(z;\tau).
\end{align*}
\end{lemma}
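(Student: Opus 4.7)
The plan is to derive both identities directly from the series representation
$\vartheta(z;\tau) = i\sum_{k\in\Z}(-1)^k q^{\frac{1}{2}(k+\frac{1}{2})^2}\zeta^{k+\frac{1}{2}}$
given in \eqref{def:Jacobi-theta-function}.

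For the elliptic (second) transformation, I substitute $z\mapsto z+m\tau+n$ term by term. The factor $e^{2\pi i n(k+\frac{1}{2})}$ collapses to $(-1)^n$ since $n,k\in\Z$, while $e^{2\pi i m\tau(k+\frac{1}{2})}=q^{m(k+\frac{1}{2})}$ combines with the existing $q$-power by completing the square: $\frac{1}{2}(k+\frac{1}{2})^2+m(k+\frac{1}{2})=\frac{1}{2}(k+m+\frac{1}{2})^2-\frac{m^2}{2}$. Reindexing $k\mapsto k-m$ converts $(-1)^k$ into $(-1)^{k+m}$ and restores the original series of $\vartheta$, producing the claimed overall factor $(-1)^{m+n}q^{-m^2/2}\zeta^{-m}$.

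For the modular (first) transformation, it suffices to check the identity on the two generators $T=\smat{1&1\\0&1}$ and $S=\smat{0&-1\\1&0}$ of $\mathrm{SL}_2(\Z)$, because both sides transform multiplicatively under composition $\gamma_1\gamma_2$: the exponential $e^{\pi icz^2/(c\tau+d)}$ satisfies the standard Jacobi-form cocycle, and $\nu_\eta^3$ is defined precisely so that it matches the multiplicative behaviour of $\sqrt{c\tau+d}$ inherited from $\eta^3$. For $T$, every summand in the series is scaled by $e^{\pi i(k+\frac{1}{2})^2}=e^{\pi i/4}$ (since $k(k+1)\in 2\Z$), which matches $\nu_\eta^3(T)=e^{\pi i/4}$. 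For $S$, the identity reduces to the classical Jacobi inversion formula $\vartheta(z/\tau;-1/\tau)=-i\sqrt{-i\tau}\,e^{\pi iz^2/\tau}\vartheta(z;\tau)$, obtained by viewing the series as a Gaussian theta sum in $k$ and applying Poisson summation; the right-hand side agrees with $\nu_\eta^3(S)\sqrt{\tau}\,e^{\pi iz^2/\tau}\vartheta(z;\tau)$ because $\nu_\eta(S)^3\sqrt{\tau}=-i\sqrt{-i\tau}$.

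The main obstacle is the careful bookkeeping of the square-root branches of $\sqrt{c\tau+d}$: one needs to verify that when two elements are composed, $\nu_\eta^3(\gamma_1\gamma_2)\sqrt{c(\gamma_1\gamma_2)\tau+d(\gamma_1\gamma_2)}$ agrees as a specific branch of the square root with the product of the analogous factors for $\gamma_1$ and $\gamma_2$. This is classical and is encoded in the very definition of $\nu_\eta$ as the multiplier of $\eta$ cubed. Once this cocycle is in place, writing an arbitrary $\gamma\in\mathrm{SL}_2(\Z)$ as a word in $T$ and $S$ and iterating the two base-case identities delivers the general transformation.
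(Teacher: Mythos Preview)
The paper states this lemma as a well-known preliminary result and gives no proof; your argument is the standard one and is correct. The elliptic transformation follows exactly as you describe by completing the square and reindexing, and reducing the modular transformation to the generators $T$ and $S$---checking $\nu_\eta^3(T)=e^{\pi i/4}$ and identifying the $S$-case with the classical Jacobi inversion formula via Poisson summation---together with the cocycle property of $\nu_\eta^3(\gamma)\sqrt{c\tau+d}\,e^{\pi icz^2/(c\tau+d)}$ is precisely how this is done in the literature.
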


The Jacobi theta function is an example of a Jacobi form.\footnote{For detailed study of Jacobi forms we refer the readers to Eichler--Zagier \cite{EZ1985}.} Another operator for functions transforming like Jacobi forms is the \emph{heat operator} defined, for $m\in\mathbb{Q}$, as
\begin{align}\label{eq:Heat-op}
	H_m:= 4m q\frac{\partial}{\partial q}-\left(\zeta \frac{\partial}{\partial \zeta}\right)^2.
\end{align}
This operator preserves the elliptic transformation of a Jacobi form of index $m$, and in special cases also the modularity, while increasing the weight by two.

Next, consider the false theta function from \cite{BN}
\begin{equation*}
	\psi(z;\tau) := i\sum_{n\in\Z} \sgn \left(n+\frac{1}{2}\right) (-1)^n \zeta^{n+\frac{1}{2}} q^{\frac{1}{2}\left(n+\frac{1}{2}\right)^2} 
\end{equation*}
Unlike the Jacobi theta function, $\psi$ is not modular due to the presence of an extra $\sgn$ factor. The first author and Nazaroglu found the following (Jacobi) completion of $\psi$ ($\tau,w\in\mathbb{H}$ and $z\in\mathbb{C}$)
\begin{equation*}
\widehat\psi(z;\tau,w) := i\sum_{n\in\Z} \erf \left(-i\sqrt{\pi i(w-\tau)} \left(n+\frac{1}{2}+\frac{z_2}{\tau_2}\right)\right) (-1)^n q^{\frac{1}{2}\left(n+\frac{1}{2}\right)^2} \zeta^{n+\frac{1}{2}},
\end{equation*}
where $\erf(z):=\frac{2}{\sqrt{\pi}}\int_0^ze^{-t^2}dt$. To recover $\psi$, one needs to take a certain limit.
More precisely, we have the following lemma \cite[Theorem~2.3~and~(1.3)]{BN}.
\begin{lemma}\label{lem:widehat-psi-trans}
	For $\gamma=\begin{psmallmatrix} a&b \\ c&d \end{psmallmatrix}\in\mathrm{SL}_2(\Z)$ and $m,n\in\Z$, we have 
	\begin{align*}
\widehat\psi\left(\frac{z}{c\tau+d};\frac{a\tau+b}{c\tau+d},\frac{aw+b}{cw+d}\right) &= \chi_{\tau,w} (\gamma) \nu_\eta^3 (\gamma) \sqrt{c\tau+d}   e^{\frac{\pi icz^2}{c\tau+d}} \widehat\psi(z;\tau,w),\\
	\widehat{\psi}(z+m\tau+n;\tau,w) &= (-1)^{m+n}q^{-\frac{m^2}{2}}\zeta^{-m} \widehat{\psi}(z;\tau,w).
	\end{align*}
Furthermore, $\psi$ is the holomorphic part of $\widehat\psi$ i.e., if $-\tfrac{1}{2}<\tfrac{z_2}{\tau_2}<\tfrac{1}{2}$ and $\varepsilon>0$ arbitrary, we have
\begin{equation}\label{limit}
\lim_{t\to\infty} \widehat\psi(z;\tau,\tau+it+\varepsilon) = \psi(z;\tau).
\end{equation}
\end{lemma}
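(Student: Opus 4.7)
The plan is to establish the three assertions—modular transformation, elliptic transformation, and the holomorphic limit—separately, following the strategy of the first author and Nazaroglu. The key starting observation is that $\widehat\psi$ arises from $\psi$ by replacing $\sgn(n+\tfrac12)$ with an error factor, and the error function itself can be written as $\sgn$ plus a Gaussian integral. This lets one recast $\widehat\psi(z;\tau,w)$ as the sum of $\psi(z;\tau)$ and an Eichler-type period integral whose integrand is, up to a simple kernel, the classical Jacobi theta function $\vartheta$ in an auxiliary modular variable running from $\tau$ to $w$.

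For the modular transformation under $\mathrm{SL}_2(\Z)$, I would pull the action of $\gamma$ through this integral representation. Since the integrand transforms by Lemma~\ref{lem:vartheta-eta-trans} and the endpoints of the contour transform simultaneously, the factor $\nu_\eta^3(\gamma)\sqrt{c\tau+d}\,e^{\pi icz^2/(c\tau+d)}$ is inherited directly from $\vartheta$. The quantity $\sqrt{i(w-\tau)}$ appearing inside $\erf$ transforms as $(c\tau+d)^{-1/2}(cw+d)^{-1/2}\sqrt{i(w-\tau)}$ only up to a global sign; this is precisely the sign that the definition \eqref{def:chi-tau-w} of $\chi_{\tau,w}(\gamma)\in\{\pm1\}$ records.

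For the elliptic transformation, I would substitute $z\mapsto z+m\tau+n$ into the defining series. The shift $z_2/\tau_2\mapsto z_2/\tau_2+m$ in the $\erf$-argument is absorbed by re-indexing the summation by $n\mapsto n-m$; the standard quadratic completion $\tfrac12(n+\tfrac12)^2+m(n+\tfrac12)=\tfrac12((n+m)+\tfrac12)^2-\tfrac{m^2}{2}$ together with $(-1)^{n-m}=(-1)^{m+n}$ then produces the claimed $(-1)^{m+n}q^{-m^2/2}\zeta^{-m}$ prefactor. For the limit \eqref{limit}, taking $w=\tau+it+\varepsilon$ gives $i(w-\tau)=-t+i\varepsilon$, so the $\erf$-argument diverges in magnitude like $\sqrt{t}$; the hypothesis $-\tfrac12<z_2/\tau_2<\tfrac12$ ensures that its sign agrees with $\sgn(n+\tfrac12)$. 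Since $\erf(\pm\infty)=\pm1$, every term of the series converges to the corresponding term of $\psi(z;\tau)$, and the Gaussian $q$-weights justify interchanging the limit with the summation.

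The main obstacle will be the careful bookkeeping of the branches of $\sqrt{i(w-\tau)}$, $\sqrt{c\tau+d}$, and $\sqrt{cw+d}$ under the simultaneous modular action, together with verifying that the Eichler-type integral representation extends to a domain stable under the full action of $\mathrm{SL}_2(\Z)$ on the pair $(\tau,w)$. These branch issues are exactly what the correction factor $\chi_{\tau,w}(\gamma)$ is designed to capture, and organizing the computation so that the sign appears cleanly as a single factor—rather than being redistributed among several square roots—will require care.
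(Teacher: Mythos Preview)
The paper does not actually prove this lemma: it is quoted verbatim from \cite[Theorem~2.3 and (1.3)]{BN}, so there is no in-paper argument to compare against. Your sketch is in fact a faithful outline of the proof given in \cite{BN}: there the modular transformation is obtained exactly by rewriting the $\erf$-weight as $\sgn$ plus a Gaussian tail, which recasts $\widehat\psi$ as $\psi$ plus an Eichler-type integral of $\vartheta$ in an auxiliary variable running between $\tau$ and $w$; the transformation of $\vartheta$ from Lemma~\ref{lem:vartheta-eta-trans} then supplies the automorphy factor, and the branch ambiguity of $\sqrt{i(w-\tau)}$ versus $\sqrt{c\tau+d}\sqrt{cw+d}$ is precisely what $\chi_{\tau,w}$ records. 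Your treatment of the elliptic transformation (re-indexing $n\mapsto n-m$ after the shift $z_2/\tau_2\mapsto z_2/\tau_2+m$ and completing the square) and of the limit \eqref{limit} (noting that $-i\sqrt{\pi i(w-\tau)}\sim\sqrt{\pi t}$ so that the constraint $|z_2/\tau_2|<\tfrac12$ forces the $\erf$-argument to have sign $\sgn(n+\tfrac12)$) are both correct and match the original source. In short, your plan is sound and coincides with the cited proof; for the present paper no argument is required beyond the reference.
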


\subsection{Quasimodular forms}\label{sub:quasimodular}
Recall Ramanujan's differential equations
\begin{align}\label{eq:der-of-Eis}
	D(G_2) &= - 2 G_2^2 + \frac56G_4,\quad
	D(G_4) = -8G_2G_4 + \frac7{10}G_6,\quad
	D\lrb{G_6} = -12G_2G_6 + \frac{400}{7}G_4^2.
\end{align}
We also need the well-known fact that 
\begin{align}\label{eq:der-eta-as-G-2}
	D\lrb{\Log(\eta)}=-G_2,
\end{align}
where $\Log$ denotes the principal branch of the complex logarithm. The function $G_2$ is an example of a quasimodular form.
A function $f$ on $\mathbb{H}$ is called a \emph{quasimodular form of weight $k$ and multiplier $\nu$ on} $\Gamma \subset \mathrm{SL}_2(\mathbb{Z})$ if there exist holomorphic functions $f_j : \mathbb{H} \to \mathbb{C}$ for $j \in \{0, \ldots, s\}$, for some $s \in \mathbb{N}_0$ with $f_0 = f$, such that the following hold: 
\begin{enumerate}[leftmargin=*]
\item We have
$
 f(\tfrac{a\tau + b}{c\tau + d})
= \nu \left(\begin{smallmatrix} a & b \\ c& d\end{smallmatrix}\right)(c\tau + d)^k\sum_{j=0}^s f_j(\tau)\bigl(\frac{c}{c\tau + d}\bigr)^j
\quad \text{for all }
\begin{psmallmatrix} a & b \\ c & d \end{psmallmatrix} \in \Gamma
$.\vspace{3pt}
\item As $\tau_2 \to \infty$, $\smash{(c\tau + d)^{-(k-2j)} f_j( \tfrac{a\tau + b}{c\tau + d})}$ is bounded for $j \in \{0, \ldots, s\}$ and $\begin{psmallmatrix} a & b \\ c & d \end{psmallmatrix} \in \mathrm{SL}_2(\mathbb{Z})$.
\end{enumerate}
A real-analytic function $f(\tau,w)$ \emph{transforms like a quasimodular form of weight $(k,\ell)$ and multiplier $\nu_{\tau,w}$ on $\Gamma\subset\mathrm{SL}_2(\Z)$ under the simultaneous action on $\tau$ and $w$} if there exist real-analytic functions $f_{n,m}(\tau,w)$ for $0\le m\le s_1,  0\le n\le s_2$ for some $s_1,s_2\in\N$ with $f_{0,0}(\tau,w)= f(\tau,w)$ such that, for all $\gamma=\begin{psmallmatrix}a&b\\c&d\end{psmallmatrix}\in\Gamma$,
\begin{align*}
f \left(\frac{a\tau+b}{c\tau+d},\frac{aw+b}{cw+d}\right) =\nu_{\tau,w} (\gamma)(c\tau+d)^{k}(cw+d)^{\ell}\sum_{\substack{0\le n\le s_1\\0\le m\le s_2}} f_{n,m}(\tau,w)\left(\frac{c}{c\tau+d}\right)^n\left(\frac{c}{cw+d}\right)^m.
\end{align*}

\section{Proofs of Theorems~\ref{thm:comp-of-g_k} and~\ref{thm:comp-of-h_k}}\label{sec:False and partial Eisenstein series}
In this section, we express $h$ in terms of $T$ and $\vartheta$,	 and give the completions of $T$ and $h$.
First, we note that
\begin{align}
T(z;\tau) &= \vartheta(z;\tau)+\psi(z;\tau), \quad
h(\z;q) = \frac{i}{2}\left(\zeta^{\frac12}-\zeta^{-\frac12}\right)q^{\frac{1}{8}}\sum_{\pm}\mp \zeta^{\pm1} T(3z\pm \tau;3\tau).\label{def:MathcalT-MathcalH-as-T}
\end{align}

\subsection{Modularity}
Using \eqref{def:MathcalT-MathcalH-as-T}, we define the completion of $T$ and $h$ as 
\begin{align}
\widehat T(z;\tau,w):=\widehat T(z,\overline{z};\tau,\overline{\tau},w,\overline{w})&:=\vartheta(z;\tau)+\widehat \psi (z;\tau,w),\label{That}\\
\widehat{H}(z;\tau,w):=\widehat{H}(z,\overline{z};\tau,\overline{\tau},w,\overline{w}) &:= \frac{i}{2}\left(\zeta^{\frac12}-\zeta^{-\frac12}\right)q^{\frac{1}{8}}\sum_{\pm}\mp \zeta^{\pm1} \widehat{T}(3z\pm \tau;3\tau,3w).\label{def:widehatMathcalH}
\end{align}

We have the following modular transformation of $\widehat{T}$.
\begin{proposition}\label{prop:Thattransfo}
	We have, for $\gamma=\begin{psmallmatrix} a&b\\ c&d \end{psmallmatrix}\in\mathrm{SL}_2(\Z)$, (with $\chi_{\tau,w}$ is defined in \eqref{def:chi-tau-w})
	\begin{equation*}
\widehat T \left(\frac{z}{c\tau+d};\frac{a\tau+b}{c\tau+d},\frac{aw+b}{cw+d}\right) = \chi_{\tau,w} (\gamma) \nu_\eta^3 (\gamma)\sqrt{c\tau+d}   e^{\frac{\pi icz^2}{c\tau+d}} \widehat T \left(\chi_{\tau,w} (\gamma) z;\tau,w\right).
	\end{equation*}
\end{proposition}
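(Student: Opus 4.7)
The plan is to reduce the claim to the already established transformations of the two summands of the decomposition $\widehat T = \vartheta + \widehat\psi$ from \eqref{That}, and then to absorb the sign $\chi_{\tau,w}(\gamma)\in\{\pm1\}$ via the $z$-parity of $\vartheta$ and $\widehat\psi$ separately. The essential structural input is that $\chi_{\tau,w}(\gamma)$ takes only the values $\pm 1$, so that squaring it is harmless while inserting it inside an odd (resp.\ even) function of $z$ is equivalent to flipping the sign of $z$ (resp.\ doing nothing).

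First, I would substitute \eqref{That} into the left-hand side and apply Lemma~\ref{lem:vartheta-eta-trans} to the $\vartheta$-summand and Lemma~\ref{lem:widehat-psi-trans} to the $\widehat\psi$-summand. Both transformations share the common prefactor $\nu_\eta^3(\gamma)\sqrt{c\tau+d}\,e^{\pi icz^2/(c\tau+d)}$, so one obtains
\begin{align*}
\widehat T\!\left(\frac{z}{c\tau+d};\frac{a\tau+b}{c\tau+d},\frac{aw+b}{cw+d}\right) = \nu_\eta^3(\gamma)\sqrt{c\tau+d}\, e^{\frac{\pi icz^2}{c\tau+d}}\bigl(\vartheta(z;\tau)+\chi_{\tau,w}(\gamma)\widehat\psi(z;\tau,w)\bigr).
\end{align*}
Writing $\chi := \chi_{\tau,w}(\gamma)$ and using $\chi^2=1$, the bracketed expression equals $\chi\bigl(\chi\vartheta(z;\tau)+\widehat\psi(z;\tau,w)\bigr)$. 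Since $e^{\pi icz^2/(c\tau+d)}$ is manifestly even in $z$, the claim thus reduces to the single identity
\begin{align*}
\chi\vartheta(z;\tau)+\widehat\psi(z;\tau,w) = \vartheta(\chi z;\tau)+\widehat\psi(\chi z;\tau,w) = \widehat T(\chi z;\tau,w).
\end{align*}

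The second step is to verify the parity identities that make the last equation hold, namely that $\vartheta$ is odd in $z$ while $\widehat\psi$ is even in $z$. The oddness of $\vartheta$ follows at once from its series in \eqref{def:Jacobi-theta-function} via the index substitution $n\mapsto -n-1$, which sends $(-1)^n\zeta^{n+\frac12}$ to $-(-1)^n\zeta^{-(n+\frac12)}$. For $\widehat\psi$ one performs the same substitution in its defining series, simultaneously tracking the flip $z_2\mapsto -z_2$ inside the error-function argument and invoking $\erf(-x)=-\erf(x)$; the extra sign from $\erf$ cancels the one that made $\vartheta$ odd, so $\widehat\psi$ comes out even in $z$. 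This parity check for $\widehat\psi$ is the only place where an explicit calculation is really required, and the main (if minor) obstacle is the bookkeeping of the three independent sign sources. Once it is in hand, $\chi\vartheta(z;\tau)=\vartheta(\chi z;\tau)$ and $\widehat\psi(z;\tau,w)=\widehat\psi(\chi z;\tau,w)$ for $\chi\in\{\pm 1\}$, and substituting back completes the proof.
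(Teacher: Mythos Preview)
Your proof is correct and follows essentially the same approach as the paper: decompose $\widehat T=\vartheta+\widehat\psi$, apply the known transformations of the summands (Lemmas~\ref{lem:vartheta-eta-trans} and~\ref{lem:widehat-psi-trans}) to obtain the common prefactor times $\vartheta(z;\tau)+\chi_{\tau,w}(\gamma)\widehat\psi(z;\tau,w)$, and then use that $\chi_{\tau,w}(\gamma)\in\{\pm1\}$ together with the oddness of $\vartheta$ and the evenness of $\widehat\psi$ in $z$ to rewrite this as $\chi_{\tau,w}(\gamma)\widehat T(\chi_{\tau,w}(\gamma)z;\tau,w)$. The only cosmetic difference is that the paper records the parity fact as the identity $\widehat T(-z;\tau,w)=-\vartheta(z;\tau)+\widehat\psi(z;\tau,w)$ before invoking it, whereas you spell out the $\vartheta$ and $\widehat\psi$ parities separately.
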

\begin{proof}
First we note, by \eqref{def:MathcalT-MathcalH-as-T}, using that $z\mapsto \vartheta(z;\tau)$ is odd and $z\mapsto\widehat{\psi}(z;\tau,w)$ is even,
\begin{align}\label{eq:T-1-T-2--z}
	\widehat T(-z;\tau,w) &= -\vartheta(z;\tau)+\widehat \psi (z;\tau,w).
\end{align}
Using \eqref{def:MathcalT-MathcalH-as-T} and Lemmas~\ref{lem:vartheta-eta-trans} and~\ref{lem:widehat-psi-trans}, we have 
\begin{equation*}
	\widehat T \left(\frac{z}{c\tau+d};\frac{a\tau+b}{c\tau+d},\frac{aw+b}{cw+d}\right) 
						= \nu_\eta^3(\gamma)\sqrt{c\tau+d}   e^{\frac{\pi icz^2}{c\tau+d}}\lrb{\vartheta(z;\tau)+ \chi_{\tau,w}(\gamma)\widehat \psi (z;\tau,w)}.
\end{equation*}
Recalling that $\chi_{\tau,w}$ takes value in $\{1,-1\}$ and using \eqref{That} and \eqref{eq:T-1-T-2--z} gives the proposition.
\end{proof}

Next we determine the modular transformation for $\widehat{H}$. We first define 
\begin{align}
	\widehat{\mathfrak{H}}(z;\tau,w)&:=\frac{2i\zeta^{\frac{1}{2}}q^{\frac{1}{24}}}{1-\zeta}\widehat{H}(z;\tau,w)\label{def:mathscrH}
\end{align}
\begin{proposition}\label{prop:WidehatH-transform}
	We have, for $\gamma=\begin{psmallmatrix} a&b\\ c&d \end{psmallmatrix}\in\Gamma_0(3)$ with $a\equiv\ell\Pmod{3}$ where $\ell\in\{1,-1\}$,
	\begin{align*}
	\widehat{\mathfrak{H}}\!\lrb{\frac{z}{c\t+d};\frac{a\t+b}{c\t+d}, \frac{aw+b}{cw+d}} &=  \nu (\gamma) \sqrt{c\tau+d}   e^{\frac{3\pi icz^2}{c\tau+d}} \widehat{\mathfrak{H}}\!\left(\chi_{\tau,w}(\gamma) z;\tau,w\right),
\end{align*}
where $\chi_{\tau,w}$ is defined in \eqref{def:chi-tau-w} and
\begin{align*}
	\nu(\gamma) := (-1)^{b+\frac{a-\ell}{3}} e^{\frac{\pi iab}{3}}\ell  \nu_\eta^3 \begin{pmatrix}a&3b\\\frac{c}{3}&d\end{pmatrix} .
\end{align*}
\end{proposition}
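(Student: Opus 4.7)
My plan is to deduce the modular transformation of $\widehat{\mathfrak{H}}$ from that of $\widehat{T}$ (Proposition~\ref{prop:Thattransfo}) via an auxiliary matrix at level $3$ together with elliptic bookkeeping. As a first simplification, I would combine \eqref{def:mathscrH} and \eqref{def:widehatMathcalH} and use $\zeta^{\frac12}(\zeta^{\frac12}-\zeta^{-\frac12})=\zeta-1$ to collapse the elementary prefactor to
\[
\widehat{\mathfrak{H}}(z;\tau,w) \;=\; q^{\frac{1}{6}}\sum_{\pm}\mp\,\zeta^{\pm1}\,\widehat{T}(3z\pm\tau;\,3\tau,3w),
\]
reducing the problem to tracking the transformation of the two summands together with the non-modular prefactor $q^{\frac{1}{6}}\zeta^{\pm 1}$.

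Given $\gamma=\begin{psmallmatrix}a&b\\c&d\end{psmallmatrix}\in\Gamma_0(3)$, I write $c=3c'$ and set $\gamma':=\begin{psmallmatrix}a&3b\\c'&d\end{psmallmatrix}\in\mathrm{SL}_2(\Z)$, so that $\gamma'(3\tau)=3\tau'$, $\gamma'(3w)=3w'$, and $c'(3\tau)+d=c\tau+d$. Since the positive scalar $3$ cancels from both square roots in \eqref{def:chi-tau-w}, we have $\chi_{3\tau,3w}(\gamma')=\chi_{\tau,w}(\gamma)=:\chi$. Rewriting $3z'\pm\tau'=(3z\pm(a\tau+b))/(c\tau+d)$ and applying Proposition~\ref{prop:Thattransfo} at level $3$ contributes the factor $\chi\,\nu_\eta^3(\gamma')\sqrt{c\tau+d}\,e^{\pi ic(3z\pm(a\tau+b))^2/(3(c\tau+d))}$ and sends the argument of $\widehat{T}$ to $\chi(3z\pm(a\tau+b))$.

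Next, writing $a=3\alpha+\ell$ with $\alpha=(a-\ell)/3$, I use the elliptic transformation of $\widehat{T}$ (Lemma~\ref{lem:widehat-psi-trans}, applied with modulus $3\tau$ and shift $m=\pm\chi\alpha$, $n=\pm\chi b$) to reduce the argument to $\chi(3z\pm\ell\tau)=3(\chi z)\pm\chi\ell\tau$, at the cost of the factor $(-1)^{\chi(\alpha+b)}q^{-3\alpha^2/2}e^{\mp 2\pi i\alpha(3z\pm\ell\tau)}$. If $\chi\ell=1$, the two resulting summands match those of $\widehat{\mathfrak{H}}(\chi z;\tau,w)$ directly; if $\chi\ell=-1$, the $\pm$-summands are swapped and one picks up an extra overall $-1$. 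In both cases the net contribution from this matching is $\chi\ell$.

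The final step is prefactor assembly. The $9z^2$-term of $(3z\pm(a\tau+b))^2$ produces the target quadratic factor $e^{3\pi icz^2/(c\tau+d)}$. The $z$-linear contributions from $\zeta'^{\pm 1}$, the cross term $\pm 6z(a\tau+b)$ in the Proposition~\ref{prop:Thattransfo} exponent, and $e^{\mp 6\pi i\alpha z}$ from the elliptic shift telescope to $e^{\pm 2\pi i\ell z}$ via the identity $1+c(a\tau+b)=a(c\tau+d)$ (a consequence of $ad-bc=1$), which together with the swap analysis reproduces the $\zeta^{\pm\chi}$ appearing in $\widehat{\mathfrak{H}}(\chi z;\tau,w)$. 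The $z$-independent but $\tau$-dependent combination $q'^{\frac16}\cdot e^{\pi ic(a\tau+b)^2/(3(c\tau+d))}\cdot q^{-3\alpha^2/2}\cdot e^{-2\pi i\alpha\ell\tau}$ collapses, via $a^2=9\alpha^2+6\alpha\ell+1$, exactly to $q^{\frac16}e^{\pi iab/3}$. Combining the remaining constants $\chi\cdot\chi\ell\cdot(-1)^{\chi(\alpha+b)}\,\nu_\eta^3(\gamma')\cdot e^{\pi iab/3}$ and using $\chi^2=1$ and $(-1)^{\chi(\alpha+b)}=(-1)^{\alpha+b}$ yields precisely the claimed multiplier $\nu(\gamma)=(-1)^{b+\alpha}e^{\pi iab/3}\ell\,\nu_\eta^3(\gamma')$. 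The main obstacle is exactly this bookkeeping: since $q^{\frac16}\zeta^{\pm1}$ is not itself a modular object, its transformation has to be reconstructed through a delicate cancellation between the exponential in Proposition~\ref{prop:Thattransfo}, the elliptic shift, and the algebraic identities above, with the three constants $\ell$, $(-1)^{\alpha+b}$, and $e^{\pi iab/3}$ in $\nu(\gamma)$ each originating from a different piece of this cancellation.
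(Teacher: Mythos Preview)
Your proposal is correct and follows essentially the same route as the paper's own proof: both express $\widehat{\mathfrak{H}}$ as $f_--f_+$ with $f_\pm(z;\tau,w)=q^{1/6}\zeta^{\pm1}\widehat{T}(3z\pm\tau;3\tau,3w)$, apply Proposition~\ref{prop:Thattransfo} with the auxiliary matrix $\gamma'=\begin{psmallmatrix}a&3b\\c/3&d\end{psmallmatrix}$, reduce $3z\pm(a\tau+b)$ to $3z\pm\ell\tau$ via the elliptic transformation with $a=3\alpha+\ell$, and then assemble the constants. The only cosmetic difference is that the paper performs the elliptic shift before inserting the sign $\chi$ (obtaining $f_\pm\mapsto f_{\pm\ell\chi}(\chi z;\tau,w)$ directly), whereas you carry $\chi$ through the shift; since $\chi\in\{\pm1\}$ this is equivalent.
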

\begin{proof}
To determine the transformation of $\widehat{H}$, we first study $\widehat T(3z\pm \tau;3\tau,3w)$.  Using Proposition~\ref{prop:Thattransfo}, we have, for $\gamma=\begin{psmallmatrix} a&b\\ c&d \end{psmallmatrix}\in\Gamma_0(3)$,
\begin{multline}
	\widehat T\left(\frac{3z}{c\tau+d}\pm \frac{a\tau+b}{c\tau+d};3\frac{a\tau+b}{c\tau+d},3\frac{aw+b}{cw+d}\right)\\
	=\chi_{\tau,w}(\gamma)\, \nu_\eta^3\! \mat{a&3b\\\frac{c}{3}&d} \sqrt{c\tau+d}   e^{\frac{\pi ic \left(3z\pm (a\tau+b)\right)^2}{3(c\tau+d)}}  \widehat T \left(\chi_{\tau,w}(\gamma) (3z\pm (a\tau+b));3\tau,3w\right).
\label{Thattransform}
\end{multline}
Now, let
\begin{align*}
f_{\pm}(z;\tau,w):= q^{\frac{1}{6}}\z^{\pm1} \widehat{T}(3z\pm \tau;3\tau,3w).
\end{align*}
Writing $a=3r+\ell$   $(r\in \mathbb{Z}, \ell \in \{1,-1\})$ and using Lemmas~\ref{lem:widehat-psi-trans}, \ref{lem:vartheta-eta-trans}, and \eqref{def:MathcalT-MathcalH-as-T} gives
\begin{align*}
	\widehat T\left(3z\pm (a\tau+b);3\tau,3w\right)
	&=(-1)^{r+b} q^{-\frac{3r^2}{2}-\ell r} \zeta^{\mp3r} \widehat T(3z\pm \ell\tau;3\tau,3w).
\end{align*}
Using this and \eqref{Thattransform}, we have
\begin{align*}
	&f_{\pm} \left(\frac{z}{c\tau+d};\frac{a\tau+b}{c\tau+d},\frac{aw+b}{cw+d}\right) \nonumber\\
	&\hspace{1cm}=(-1)^{r+b} e^{\frac{\pi iab}{3}} e^{\frac{2\pi i \tau}{6}  \pm 2\pi i\ell z + \frac{3\pi ic z^2}{c\tau+d}} \chi_{\tau,w}(\gamma)\, \nu_\eta^3\! \mat{a&3b\\\frac{c}{3}&d} \sqrt{c\tau+d}\, \widehat T\!\left(\chi_{\tau,w}(\gamma) (3z\pm \ell\tau);3\tau,3w\right)\nonumber\\
&\hspace{1cm}= (-1)^{r+b} e^{\frac{\pi iab}{3}} \chi_{\tau,w}(\gamma)\, \nu_\eta^3\! \mat{a&3b\\ \frac{c}{3}&d} \sqrt{c\tau+d}\,    e^{\frac{3\pi i c z^2}{c\tau+d}} f_{\pm \ell \chi_{\tau,w}(\gamma)}\! \left(\chi_{\tau,w}(\gamma) z;\tau,w\right) . 
\end{align*}
The claim then follows by using \eqref{def:widehatMathcalH} and \eqref{def:mathscrH} to rewrite
\begin{align*}
	\widehat{\mathfrak{H}}(z;\tau,w) &= f_{-}(z;\t;w)- f_{+}(z;\t;w).\qedhere
\end{align*}
\end{proof}

\subsection{Modular completions of $g_k$ and $h_k$}
Analogous to \eqref{def:mathcalT-as-exp} and \eqref{def:mathcalH-as-exp}, we define
\begin{align}
	\widehat{T}(z;\tau,w) &=:  \widehat{T}(0;\tau,w)\exp\lrb{-\sum_{\substack{k\ge 1\\ \ell\ge0}}\widehat{g}_{k,\ell}(\tau,w)\frac{(2\pi iz)^k}{k!}\frac{(2\pi i\overline{z})^\ell}{\ell!}},\label{def:widehatMathcalT-as-exp}\\
	\widehat{H}(z;\tau,w) &=: 2i\sin(\pi z) \left[\frac{\partial}{\partial e^{2\pi ix}}\widehat{H}(x;\tau,w)\right]_{x=0} \exp\lrb{-\sum_{\substack{k\ge 1\\\ell\ge0}}\widehat{h}_{k,\ell}(\tau,w)\frac{(2\pi iz)^k}{k!}\frac{(2\pi i\overline{z})^\ell}{\ell!}}.\label{def:widehatMathcalH-as-exp}
\end{align}

Using Proposition~\ref{prop:Thattransfo} and \eqref{def:widehatMathcalT-as-exp}, we obtain the following theorem.

\begin{theorem}\label{thm:t-n-modular-trans}
	For $\gamma=\begin{psmallmatrix} a&b\\ c&d \end{psmallmatrix}\in\mathrm{SL}_2(\Z)$, we have
	\begin{align*}
			\widehat{g}_{k,\ell}\lrb{\frac{a\tau+b}{c\tau+d},\frac{aw+b}{cw+d}} &= 
			 \chi_{\tau,w}^{k+\ell} (\gamma) (c\tau+d)^k(c\overline{\tau}+d)^\ell 
			\widehat{g}_{k,\ell}(\tau,w) + \delta_{k,2}\delta_{\ell,0}\frac{ic}{2\pi}(c\tau+d).
	\end{align*}
\end{theorem}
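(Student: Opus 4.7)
The plan is to derive the transformation law of $\widehat{g}_{k,\ell}$ directly from Proposition~\ref{prop:Thattransfo} by comparing coefficients in the exponential generating function in \eqref{def:widehatMathcalT-as-exp}. First I would fix $\gamma=\begin{psmallmatrix}a&b\\c&d\end{psmallmatrix}\in\mathrm{SL}_2(\Z)$ and write $\tau':=\tfrac{a\tau+b}{c\tau+d}$ and $w':=\tfrac{aw+b}{cw+d}$; since $\gamma$ has real entries the complex conjugate commutes with the Möbius action, so after substituting $z\mapsto z/(c\tau+d)$ the conjugate variable becomes $\overline{z}/(c\overline{\tau}+d)$. Evaluating the defining identity \eqref{def:widehatMathcalT-as-exp} at the transformed arguments and invoking Proposition~\ref{prop:Thattransfo} yields
\begin{align*}
\chi\,\nu_\eta^3(\gamma)\sqrt{c\tau+d}\,e^{\frac{\pi icz^2}{c\tau+d}}\widehat{T}(\chi z;\tau,w)=\widehat{T}(0;\tau',w')\exp\!\left(-\sum_{k,\ell}\frac{\widehat{g}_{k,\ell}(\tau',w')\,(2\pi iz)^k(2\pi i\overline{z})^\ell}{k!\,\ell!\,(c\tau+d)^k(c\overline{\tau}+d)^\ell}\right),
\end{align*}
where I write $\chi:=\chi_{\tau,w}(\gamma)\in\{\pm 1\}$ for brevity. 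Setting $z=0$ pins down the scalar prefactor $\widehat{T}(0;\tau',w')=\chi\,\nu_\eta^3(\gamma)\sqrt{c\tau+d}\,\widehat{T}(0;\tau,w)$, so dividing the displayed identity by its value at $z=0$ cancels the multipliers entirely.

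Next I would expand $\widehat{T}(\chi z;\tau,w)/\widehat{T}(0;\tau,w)$ via \eqref{def:widehatMathcalT-as-exp}, using that $\chi\in\mathbb{R}$ so that $\overline{\chi z}=\chi\overline{z}$ and $\chi^{k+\ell}$ factors out of every monomial. Taking logarithms then produces the purely polynomial identity
\begin{align*}
\frac{\pi icz^2}{c\tau+d}-\sum_{k,\ell}\chi^{k+\ell}\widehat{g}_{k,\ell}(\tau,w)\frac{(2\pi iz)^k(2\pi i\overline{z})^\ell}{k!\,\ell!}=-\sum_{k,\ell}\frac{\widehat{g}_{k,\ell}(\tau',w')\,(2\pi iz)^k(2\pi i\overline{z})^\ell}{k!\,\ell!\,(c\tau+d)^k(c\overline{\tau}+d)^\ell}.
\end{align*}
Rewriting the anomalous quadratic contribution as $\tfrac{\pi icz^2}{c\tau+d}=-\tfrac{ic}{2\pi(c\tau+d)}\cdot\tfrac{(2\pi iz)^2}{2!}$ shows that it affects only the $(k,\ell)=(2,0)$ slot; comparing coefficients of $\tfrac{(2\pi iz)^k(2\pi i\overline{z})^\ell}{k!\,\ell!}$ and multiplying through by $(c\tau+d)^k(c\overline{\tau}+d)^\ell$ then gives the stated formula, with the factor of $c\tau+d$ (not $(c\tau+d)^2$) appearing because one power is absorbed into the denominator of the quadratic term.

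The proof is essentially bookkeeping: Proposition~\ref{prop:Thattransfo} already packages the modular and elliptic behaviour of $\widehat{T}$, and I only need to push that data through a two-variable exponential Taylor expansion. The one mild subtlety — which is also the conceptual heart of the result — is that the quadratic factor $e^{\pi icz^2/(c\tau+d)}$ is holomorphic in $z$ and depends on $\tau$ but not on $\overline{\tau}$, which is exactly what forces the quasi-modular correction to appear only in the purely holomorphic coefficient $\widehat{g}_{2,0}$ and with weight $(2,0)$, matching Theorem~\ref{thm:comp-of-g_k} in the limit $w\to\tau+i\infty$.
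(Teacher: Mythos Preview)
Your argument is correct and is exactly the approach the paper indicates: the paper's own proof consists only of the sentence ``Using Proposition~\ref{prop:Thattransfo} and \eqref{def:widehatMathcalT-as-exp}, we obtain the following theorem,'' and your proposal carries out precisely that computation, including the correct identification of the quadratic term $e^{\pi icz^2/(c\tau+d)}$ as the source of the $(k,\ell)=(2,0)$ quasimodular correction.
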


Similarly, using \eqref{def:mathscrH}, Proposition~\ref{prop:WidehatH-transform} and \eqref{def:widehatMathcalH-as-exp} we get the following theorem.

\begin{theorem}\label{thm:h-k-ell-modular-trans}
	For $\gamma=\begin{psmallmatrix} a&b\\ c&d \end{psmallmatrix}\in\Gamma_0(3)$, we have
	\begin{align*}
			\widehat{h}_{k,\ell}\lrb{\frac{a\tau+b}{c\tau+d},\frac{aw+b}{cw+d}} &= 
			 \chi_{\tau,w}^{k+\ell} (\gamma) (c\tau+d)^k(c\overline{\tau}+d)^\ell 
			\widehat{h}_{k,\ell}(\tau,w)  + \delta_{k,2}\delta_{\ell,0}\frac{3ic}{2\pi}(c\tau+d) .
	\end{align*}
\end{theorem}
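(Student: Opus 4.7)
The proof mirrors that of Theorem~\ref{thm:t-n-modular-trans}, replacing $\widehat T$ by the auxiliary $\widehat{\mathfrak H}$ defined in~\eqref{def:mathscrH} and using the $\Gamma_0(3)$-transformation recorded in Proposition~\ref{prop:WidehatH-transform} in place of Proposition~\ref{prop:Thattransfo}. My first step is to convert the expansion~\eqref{def:widehatMathcalH-as-exp} of $\widehat H$ into a clean exponential expansion of $\widehat{\mathfrak H}$ that carries no zero at $z=0$. Since $\frac{2i\zeta^{\frac12}}{1-\zeta}=-\frac1{\sin(\pi z)}$, the prefactor in~\eqref{def:mathscrH} exactly cancels the $\sin(\pi z)$ factor pulled out in~\eqref{def:widehatMathcalH-as-exp}, so that
\[
\widehat{\mathfrak H}(z;\tau,w)=\mathcal P(\tau,w)\exp\lrb{-\sum_{\substack{k\ge1\\ \ell\ge0}}\widehat h_{k,\ell}(\tau,w)\frac{(2\pi iz)^k}{k!}\frac{(2\pi i\overline z)^\ell}{\ell!}}
\]
for a function $\mathcal P(\tau,w)$ that is independent of $z$ and $\overline z$.

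Next I substitute this expansion on both sides of the identity in Proposition~\ref{prop:WidehatH-transform} and take logarithms. On the left, the argument $z/(c\tau+d)$ (and the simultaneous $\overline z/(c\overline\tau+d)$, since $c,d\in\Z$) produces the double series $-\sum_{k\ge1,\ell\ge0}\widehat h_{k,\ell}(\tau',w')(c\tau+d)^{-k}(c\overline\tau+d)^{-\ell}\frac{(2\pi iz)^k}{k!}\frac{(2\pi i\overline z)^\ell}{\ell!}$; on the right, the substitution $z\mapsto\chi_{\tau,w}(\gamma)z$ (and $\overline z\mapsto\chi_{\tau,w}(\gamma)\overline z$, using $\chi_{\tau,w}(\gamma)\in\{\pm1\}$) produces $-\sum\chi_{\tau,w}(\gamma)^{k+\ell}\widehat h_{k,\ell}(\tau,w)\frac{(2\pi iz)^k}{k!}\frac{(2\pi i\overline z)^\ell}{\ell!}$, and the Jacobi prefactor contributes an additional $\frac{3\pi icz^2}{c\tau+d}$ to the exponent on the right. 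Matching the coefficient of $\frac{(2\pi iz)^k}{k!}\frac{(2\pi i\overline z)^\ell}{\ell!}$ for each $k\ge 1$ then yields the desired identity for $\widehat h_{k,\ell}$; the $(k,\ell)=(0,0)$ coefficient of the log equation only records a transformation of $\mathcal P(\tau,w)$ and is irrelevant here.

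The only non-routine point is identifying the correction arising from $\frac{3\pi icz^2}{c\tau+d}$. Because this term has no $\overline z$-dependence, it contributes only at $(k,\ell)=(2,0)$, where its coefficient in the chosen basis is $-\frac{3ic}{2\pi(c\tau+d)}$. Multiplying through by $-(c\tau+d)^2$ to isolate $\widehat h_{2,0}(\tau',w')$ produces exactly the asserted correction $\frac{3ic}{2\pi}(c\tau+d)$, while at all other $(k,\ell)$ with $k\ge 1$ the correction vanishes and one obtains the pure weight-$(k,\ell)$ transformation. I do not anticipate any genuine obstacle; the care required is in the sign and exponent bookkeeping and in verifying that $\chi_{\tau,w}(\gamma)^{k+\ell}$ emerges from the joint action on $z$ and $\overline z$, exactly as in the analogous deduction of Theorem~\ref{thm:t-n-modular-trans} from Proposition~\ref{prop:Thattransfo}.
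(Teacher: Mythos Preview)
Your proposal is correct and follows essentially the same route as the paper: the paper's proof is the single sentence ``Similarly, using \eqref{def:mathscrH}, Proposition~\ref{prop:WidehatH-transform} and \eqref{def:widehatMathcalH-as-exp} we get the following theorem,'' and you have unpacked exactly that argument, including the correct identification of the $\frac{3\pi icz^2}{c\tau+d}$ term with the $\delta_{k,2}\delta_{\ell,0}$ correction.
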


We next study the limiting behaviour of $\widehat{g}_{k,\ell}$ and $\widehat{h}_{k,\ell}$.
\begin{proposition}\label{limitbehaviour}
	For $\varepsilon>0$, we have
\begin{align*}
	\lim_{t\to\infty} \widehat{g}_{k,\ell}(\tau,\tau+it+\varepsilon) 
	&= \delta_{\ell,0} g_k(\tau),\qquad
	\lim_{t\to\infty} \widehat{h}_{k,\ell}(\tau,\tau+it+\varepsilon) 
	= \delta_{\ell,0} h_k(\tau).
\end{align*}
\end{proposition}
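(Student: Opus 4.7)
The plan is to deduce the limits of $\widehat{g}_{k,\ell}$ and $\widehat{h}_{k,\ell}$ from the corresponding limits of the generating functions $\widehat{T}$ and $\widehat{H}$, which themselves inherit from the limit relation \eqref{limit} for $\widehat{\psi}$. Since $\vartheta(z;\tau)$ is independent of $w$, combining \eqref{That} with \eqref{limit} yields, for any $\varepsilon>0$ and $z$ in a neighborhood of $0$,
\begin{equation*}
\lim_{t\to\infty}\widehat{T}(z;\tau,\tau+it+\varepsilon) = \vartheta(z;\tau)+\psi(z;\tau) = T(z;\tau),
\end{equation*}
and then \eqref{def:widehatMathcalH} together with \eqref{def:MathcalT-MathcalH-as-T} gives the analogue $\lim_{t\to\infty}\widehat{H}(z;\tau,\tau+it+\varepsilon)=h(\zeta;q)$.

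The crucial step is to upgrade these pointwise limits to locally uniform convergence on a bidisc $\{(z,\overline{z})\in\mathbb{C}^2:|z|,|\overline{z}|<r\}$ (for $r$ sufficiently small, say $r<\tau_2/4$), where $z$ and $\overline{z}$ are now treated as independent complex variables, as is required for extracting the Taylor coefficients $\widehat{g}_{k,\ell}$ and $\widehat{h}_{k,\ell}$. The $\overline{z}$-dependence of $\widehat{T}$ enters solely through the $\erf$ factor in $\widehat{\psi}$ via $z_2=(z-\overline{z})/(2i)$. As $t\to\infty$ with $w=\tau+it+\varepsilon$, the argument $-i\sqrt{\pi i(w-\tau)}\lrb{n+\tfrac12+\tfrac{z_2}{\tau_2}}$ has real part of size $\sqrt{\pi t}(n+\tfrac12+\Re(z_2)/\tau_2)$, with the same sign as $n+\tfrac12$ whenever $r<\tau_2/2$, while its imaginary part is only of size $\sqrt{\pi t}\cdot r/\tau_2$. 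Choosing $r<\tau_2/4$ makes the real part dominate the imaginary part, so that $\erf\to\sgn(n+\tfrac12)$ exponentially fast and uniformly in $(z,\overline{z})$ on the bidisc, with error of order $O(e^{-c(n+\frac12)^2 t})$ for some $c>0$. The Gaussian weight $q^{(n+\frac12)^2/2}$ then guarantees that the sum over $n\in\Z$ converges uniformly, and the same argument handles $\widehat{H}$ via \eqref{def:widehatMathcalH}.

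With uniform convergence on the bidisc in hand, Cauchy's integral formula in two variables implies convergence of every Taylor coefficient at the origin. Applied to $\log(\widehat{T}(z;\tau,w)/\widehat{T}(0;\tau,w))$, this shows that $\lim_{t\to\infty}\widehat{g}_{k,\ell}(\tau,\tau+it+\varepsilon)$ agrees with the Taylor coefficient of $-(2\pi iz)^k(2\pi i\overline{z})^\ell/(k!\ell!)$ in $\log(T(z;\tau)/T_0(\tau))$. Since the limit is holomorphic in $z$ and independent of $\overline{z}$, this coefficient vanishes for every $\ell\ge 1$, and for $\ell=0$ it equals $g_k(\tau)$ by \eqref{def:mathcalT-as-exp}. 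The identical argument, applied to $\widehat{H}$ via \eqref{def:widehatMathcalH-as-exp} and \eqref{def:mathcalH-as-exp}, yields the claim for $\widehat{h}_{k,\ell}$.

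The principal obstacle is precisely the upgrade from pointwise to locally uniform convergence on the bidisc in $(z,\overline{z})$; this requires an $n$-uniform asymptotic expansion of $\erf$ that remains valid after analytic continuation of $\overline{z}$ to an independent complex variable. Everything else reduces to routine manipulation of power series.
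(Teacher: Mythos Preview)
Your proof follows essentially the same approach as the paper: derive the limits $\widehat{T}\to T$ and $\widehat{H}\to h$ from \eqref{limit}, then pass to Taylor coefficients via the exponential expansions \eqref{def:widehatMathcalT-as-exp}, \eqref{def:widehatMathcalH-as-exp} and \eqref{def:mathcalT-as-exp}, \eqref{def:mathcalH-as-exp}. The paper's proof simply writes down the resulting identity of double power series and compares coefficients without justifying the interchange of limit and coefficient extraction; your uniform-convergence argument on the $(z,\overline{z})$-bidisc supplies precisely that justification and makes the argument rigorous.
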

\begin{proof}
Using \eqref{def:MathcalT-MathcalH-as-T} and \eqref{limit}, we have
\begin{align}
	\lim_{t\to\infty}\widehat{T}(z;\tau,\tau+it+\varepsilon) &= T(z;\tau).\label{eq:mathcalT-T-limit-t-inf}
\end{align}
Similarly, we get, using \eqref{def:MathcalT-MathcalH-as-T}, \eqref{def:widehatMathcalH}, \eqref{eq:mathcalT-T-limit-t-inf}, and \eqref{eq:def-G-H},
\begin{align}
	\lim_{t\to\infty}\widehat{H}(z;\tau,\tau+it+\varepsilon) &= h(\z;q), \label{eq:mathcalH-H-limit-t-inf}
\qquad
	\lim_{t\to\infty}\left[\z\frac{\partial}{\partial \z}\widehat{H}(z;\tau,\tau+it+\varepsilon)\right]_{\z=1}  =-q^{-\frac{1}{24}}\eta(\tau).
\end{align}
Using \eqref{def:widehatMathcalT-as-exp}, \eqref{eq:mathcalT-T-limit-t-inf}, and \eqref{def:mathcalT-as-exp}, we obtain 
\begin{align}
	\lim_{t\to\infty} \widehat{T}(0;\tau,\tau+it+\varepsilon) & \exp\lrb{-\sum_{\substack{k\ge 1\\ \ell\ge0}}\widehat{g}_{k,\ell}(\tau,\tau+it+\varepsilon)\frac{(2\pi iz)^k}{k!}\frac{(2\pi i\overline{z})^\ell}{\ell!}} \nonumber\\
	&\hspace{5cm}= T_0(\tau)\exp\lrb{-\sum_{k\ge 1}g_k(\tau)\frac{(2\pi iz)^k}{k!}}.\label{eq:mathcalT-lim-mid-step}
\end{align}
Similarly, using \eqref{def:widehatMathcalH-as-exp}, \eqref{eq:mathcalH-H-limit-t-inf}, and \eqref{def:mathcalH-as-exp}, we get 
\begin{multline}
	   \lim_{t\to\infty} 2i\sin(\pi z) \left[\frac{\partial}{\partial e^{2\pi ix}}\widehat{H}(x;\tau,\tau+it+\varepsilon)\right]_{x=0}\exp\lrb{-\sum_{\substack{k\ge 1\\ \ell\ge0}}\widehat{h}_{k,\ell}(\tau,\tau+it+\varepsilon)\frac{(2\pi iz)^k}{k!}\frac{(2\pi i\overline{z})^\ell}{\ell!}}\\
	= -2i\sin(\pi z) q^{-\frac{1}{24}}\eta(\tau)\exp \lrb{-\sum_{k\ge 1}h_k(\tau)\frac{(2\pi iz)^k}{k!}} .  \label{eq:mathcalH-lim-mid-step}
\end{multline}
Using \eqref{eq:mathcalT-T-limit-t-inf} in \eqref{eq:mathcalT-lim-mid-step} and \eqref{eq:mathcalH-H-limit-t-inf} in \eqref{eq:mathcalH-lim-mid-step}, respectively, we obtain
\begin{align*}
	\sum_{\substack{k\ge 1\\\ell\ge0}}\lim_{t\to\infty}\widehat{g}_{k,\ell}(\tau,\tau+it+\varepsilon)\frac{(2\pi iz)^k}{k!}\frac{(2\pi i\overline{z})^\ell}{\ell!} &= \sum_{k\ge 1}g_k(\tau)\frac{(2\pi iz)^k}{k!},\\
	\sum_{\substack{k\ge 1\\\ell\ge0}}\lim_{t\to\infty}\widehat{h}_{k,\ell}(\tau,\tau+it+\varepsilon)\frac{(2\pi iz)^k}{k!}\frac{(2\pi i\overline{z})^\ell}{\ell!} &= \sum_{k\ge 1}h_k(\tau)\frac{(2\pi iz)^k}{k!}.
\end{align*}
Comparing coefficients gives the proposition.
\end{proof}

We are now ready to prove Theorems~\ref{thm:comp-of-g_k} and~\ref{thm:comp-of-h_k}.

\begin{proof}[Proofs of Theorems~\ref{thm:comp-of-g_k} and~\ref{thm:comp-of-h_k}]
	We define $\widehat{g}_k(\tau,w):=\widehat{g}_{k,0}(\tau,w)$ and $\widehat{h}_k(\tau,w):=\widehat{h}_{k,0}(\tau,w)$. 
	Then parts (1) of both theorems follow from Theorems~\ref{thm:t-n-modular-trans} and~\ref{thm:h-k-ell-modular-trans}. 
	Parts (2) of the theorems can be concluded from Proposition~\ref{limitbehaviour}.
\end{proof}

\section{Proofs of Theorems~\ref{thm:mathbbT-algebra} and~\ref{thm:h-n-diff}}\label{sec:differential-eq}
In this section, we show that $T$ and $h$ are eigenforms of the heat operators.
\subsection{Proof of Theorem~\ref{thm:mathbbT-algebra}}
A direct calculation shows that $T$ is annihilated by the heat operator~$H_{\frac{1}{2}}$ defined in \eqref{eq:Heat-op}.
\begin{lemma}\label{lem:PDE-for-mathcalT}
We have
	\begin{align*}
	H_{\frac12}\left(T(z;\tau)\right) &= 0.
	\end{align*}
\end{lemma}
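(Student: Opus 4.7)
The proof is a direct termwise calculation on the series expansion of $T$ given in \eqref{def:mathcalT}. I would simply differentiate under the summation sign and observe that each summand is individually annihilated by $H_{1/2}$.

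In more detail, for a monomial $\zeta^{n+\frac12} q^{\frac12(n+\frac12)^2}$ appearing in $T(z;\tau)$, one computes
\begin{align*}
q\frac{\partial}{\partial q}\left(\zeta^{n+\frac12} q^{\frac12(n+\frac12)^2}\right) &= \tfrac12\lrb{n+\tfrac12}^2 \zeta^{n+\frac12} q^{\frac12(n+\frac12)^2},\\
\lrb{\zeta\frac{\partial}{\partial \zeta}}^{\!2}\!\left(\zeta^{n+\frac12} q^{\frac12(n+\frac12)^2}\right) &= \lrb{n+\tfrac12}^2\zeta^{n+\frac12} q^{\frac12(n+\frac12)^2}.
\end{align*}
Multiplying the first equation by $4m = 2$ (that is, substituting $m=\tfrac12$ in the definition \eqref{eq:Heat-op} of $H_m$) and subtracting the second yields zero. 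Summing over $n\ge 0$ with the appropriate signs and the prefactor $2i$ from \eqref{def:mathcalT} gives $H_{\frac12}(T(z;\tau))=0$.

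There is essentially no obstacle here; the identity is nothing more than the familiar statement that $\zeta^{a}q^{a^2/(4m)}$ lies in the kernel of $H_m$, which underlies the fact that the Jacobi theta function $\vartheta$ itself satisfies $H_{\frac12}(\vartheta)=0$. The only thing to notice is that $T$ is precisely the ``one-sided'' partial theta series built from the same quadratic exponents as $\vartheta$, so the termwise annihilation goes through in exactly the same way despite the truncation of the summation range to $n\ge 0$.
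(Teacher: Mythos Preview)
Your proof is correct and is exactly the ``direct calculation'' the paper alludes to (and spells out in the analogous Lemma~\ref{lem:H-3/2-on-mathcalH}): each monomial $\zeta^{a}q^{a^2/2}$ with $a=n+\tfrac12$ is annihilated by $H_{1/2}$, and summing gives the claim.
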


\begin{proof}[Proof of Theorem~\ref{thm:mathbbT-algebra}] 
It suffices to prove the formulas for $D(g_k)$ and $D(\Log(T_0))$. 
From \eqref{def:mathcalT-as-exp},
\begin{align}
	&q\frac{\partial}{\partial q}T(z;\tau) = 
	 T(z;\tau)\lrb{q\frac{\partial}{\partial q}\Log\lrb{T_0(\tau)} -\sum_{k\ge 1} q\frac{\partial}{\partial q}g_k(\tau)\frac{(2\pi iz)^k}{k!}}. \label{eq:H2-q-action}
\end{align}
Furthermore, we have
\begin{align}
	&\lrb{\z\frac{\partial}{\partial \z}}^2 T(z;\tau)
	= T(z;\tau)\left( -\sum_{k\ge 1} k(k-1)g_k(\tau)\frac{(2\pi iz)^{k-2}}{k!}  +  \left(\sum_{k\ge 1}kg_k(\tau)\frac{(2\pi iz)^{k-1}}{k!}\right)^{  2}  \right) .\label{eq:H2-zeta-action}
\end{align}
Using Lemma~\ref{lem:PDE-for-mathcalT}, \eqref{eq:H2-q-action}, and \eqref{eq:H2-zeta-action} gives 
\begin{align*}
	H_{\frac{1}{2}}(T(z;\tau))&=T(z;\tau)
	\lrb{2\lrb{q\frac{\partial}{\partial q}\Log\lrb{T_0(\tau)} -\sum_{k\ge 1} q\frac{\partial}{\partial q}g_k(\tau)\frac{(2\pi iz)^k}{k!}}\right.\\
		&\qquad\left.+\sum_{k\ge 1} k(k-1)g_k(\tau)\frac{(2\pi iz)^{k-2}}{k!}  - \left(\sum_{k\ge 1}kg_k(\tau)\frac{(2\pi iz)^{k-1}}{k!}\right)^{  2}  }=0.
\end{align*}
Since $T(z;\tau)\neq 0$, this is equivalent to
\begin{align*}
	2\sum_{k\ge 1} q\frac{\partial}{\partial q}g_k(\tau)\frac{(2\pi iz)^k}{k!} &= \sum_{k\ge 0}g_{k+2}(\tau)\frac{(2\pi iz)^{k}}{k!} 
	 - \lrb{\sum_{k\ge 0}g_{k+1}(\tau)\frac{(2\pi iz)^{k}}{k!}}^{ 2} + 2q\frac{\partial}{\partial q}\Log\lrb{T_0(\tau)}.
\end{align*}
Comparing the coefficients of $(2\pi iz)^k$ gives the theorem.
\end{proof}

\subsection{Proof of Theorem~\ref{thm:h-n-diff}}
We show that a multiple of $h$ is annihilated by the heat operator. More precisely, define
\begin{align}\label{def:mathfrackH}
	\mathfrak{H}(z;\tau):=\frac{2iq^{\frac{1}{24}}\zeta^{\frac{1}{2}}h(\z;q)}{1-\zeta}.
\end{align} 
\begin{lemma}\label{lem:H-3/2-on-mathcalH}
We have 
\[
	H_{\frac{3}{2}} (\mathfrak{H}(z;\tau) ) = 0.
\]
\end{lemma}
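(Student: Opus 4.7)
The plan is to reduce the statement to the partial-theta PDE established in Lemma~\ref{lem:PDE-for-mathcalT}. First I would substitute the expression for $h(\zeta;q)$ from \eqref{def:MathcalT-MathcalH-as-T} into the definition~\eqref{def:mathfrackH}. Using the identity $\zeta^{1/2}(\zeta^{1/2}-\zeta^{-1/2}) = -(1-\zeta)$, the prefactor $\frac{2iq^{1/24}\zeta^{1/2}}{1-\zeta}\cdot\frac{i}{2}q^{1/8}$ collapses and one obtains the clean formula
\[
\mathfrak{H}(z;\tau) \;=\; q^{\frac{1}{6}}\zeta^{-1}T(3z-\tau;3\tau) \;-\; q^{\frac{1}{6}}\zeta\,T(3z+\tau;3\tau).
\]
By linearity, it then suffices to prove $H_{3/2}(f_\pm) = 0$ for the two summands $f_\pm(z;\tau) := q^{1/6}\zeta^{\pm 1}T(3z\pm\tau;3\tau)$.

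Next I would execute the chain rule. Writing $Z := 3z\pm\tau$ and $T' := 3\tau$, we have $\partial_z T(Z;T') = 3\partial_Z T$ and $\partial_\tau T(Z;T') = \pm\partial_Z T + 3\partial_{T'}T$, together with $\zeta\partial_\zeta (q^{1/6}\zeta^{\pm 1}) = \pm q^{1/6}\zeta^{\pm 1}$ and $q\partial_q(q^{1/6}\zeta^{\pm 1}) = \tfrac{1}{6}q^{1/6}\zeta^{\pm 1}$. Expanding $H_{3/2}f_\pm = 6q\partial_q f_\pm - (\zeta\partial_\zeta)^2 f_\pm$ via the Leibniz rule, the purely multiplicative contribution produced by $6q\partial_q$ (from the exponent $\tfrac{1}{6}$ on $q$) cancels against the constant-in-$T$ term in $(\zeta\partial_\zeta)^2\zeta^{\pm 1}$, and the cross-terms involving $\partial_Z T$ cancel as well (this is where the choice of exponent $\pm 1$ on $\zeta$ comes in). What survives is precisely
\[
H_{3/2}f_\pm \;=\; \frac{9}{2\pi i}\,q^{\frac{1}{6}}\zeta^{\pm 1}\!\left(2\partial_{T'}T - \frac{1}{2\pi i}\partial_Z^2 T\right) \;=\; \frac{9}{2\pi i}\,q^{\frac{1}{6}}\zeta^{\pm 1}\cdot H_{1/2}(T)(Z;T'),
\]
which vanishes by Lemma~\ref{lem:PDE-for-mathcalT} applied to $T$ in the variables $(Z,T')$.

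There is no real conceptual obstacle — the argument is essentially bookkeeping in the chain rule once the simplified form of $\mathfrak{H}$ is in hand. The only mildly subtle point is verifying the simultaneous cancellation of the zeroth- and first-order terms in $T$; this forces the specific prefactors $\tfrac{1}{6}$ on $q$ and $\pm 1$ on $\zeta$. Equivalently, these are the unique exponents making the substitution $(z,\tau)\mapsto(3z\pm\tau,3\tau)$ intertwine the heat operator $H_{1/2}$ with $H_{3/2}$, which is the standard heat-preserving gauge associated with an elliptic shift and a rescaling of $\tau$ by $3$.
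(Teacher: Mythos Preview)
Your argument is correct (the overall constant in your displayed formula should be $9$ rather than $\tfrac{9}{2\pi i}$, but since the expression vanishes this is harmless). However, the route differs from the paper's. The paper does not invoke Lemma~\ref{lem:PDE-for-mathcalT} at all: instead it expands $\mathfrak{H}$ directly as
\[
\mathfrak{H}(z;\tau)=2i\sum_{n\ge0}(-1)^n\zeta^{3n+\frac12}q^{\frac16(3n+\frac12)^2}-2i\sum_{n\ge0}(-1)^n\zeta^{3n+\frac52}q^{\frac16(3n+\frac52)^2}
\]
and simply observes that $H_{3/2}$ kills each monomial $\zeta^{\alpha}q^{\alpha^2/6}$. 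Your reduction via the substitution $(z,\tau)\mapsto(3z\pm\tau,3\tau)$ is more structural: it exhibits the lemma as the pullback of $H_{1/2}(T)=0$ under the index-raising gauge $q^{1/6}\zeta^{\pm1}$, and in doing so explains \emph{why} those particular exponents appear in $f_\pm$. The paper's proof is shorter and self-contained; yours makes the dependence on the earlier PDE transparent and would generalise immediately to other shifts and rescalings.
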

\begin{proof}
From \eqref{eq:def-G-H}, we obtain 
	\begin{align}
		\mathfrak{H}(z;\tau) = 2i\sum_{n\ge0} (-1)^n \zeta^{3n+\frac{1}{2}} q^{\frac{1}{6}\left(3n+\frac{1}{2}\right)^2} - 2i\sum_{n\ge0} (-1)^n \zeta^{3n+\frac{5}{2}} q^{\frac{1}{6}\left(3n+\frac{5}{2}\right)^2}. \label{eq:mathcalH-as-two-sums}
	\end{align}
	We have that
	\begin{align*}
		H_{\frac{3}{2}}\!\left(\zeta^{3n+\frac{1}{2}} q^{\frac{1}{6}\left(3n+\frac{1}{2}\right)^2}\right) = 
		H_{\frac{3}{2}}\!\left(\zeta^{3n+\frac{5}{2}} q^{\frac{1}{6}\left(3n+\frac{5}{2}\right)^2} \right) =0.
	\end{align*}
	Plugging these into \eqref{eq:mathcalH-as-two-sums} gives the claim.
\end{proof}

\begin{proof}[Proof of Theorem~\ref{thm:h-n-diff}] 
By \eqref{def:mathfrackH} and \eqref{def:mathcalH-as-exp}, we have 
\begin{align}
	\mathfrak{H}(z;\tau)  = 2i\eta(\tau)\exp\lrb{-\sum_{k\ge 1}h_k(\tau)\frac{(2\pi iz)^k}{k!}}.\label{eq:H32-on-mathcalH-mid-step}
\end{align} 
Using this, we compute  
\begin{align}
	&q\frac{\partial}{\partial q} \mathfrak{H}(z;\tau)= \mathfrak{H}(z;\tau)  \lrb{q\frac{\partial}{\partial q} \Log\lrb{\eta(\tau)}-\sum_{k\ge 1}q\frac{\partial}{	\partial q}h_k(\tau)\frac{(2\pi iz)^k}{k!}}.\label{eq:H32-q-action}
\end{align}
Next, we have 
\begin{align}
\hspace{-3pt}\lrb{\z\frac{\partial}{\partial\z}}^{ 2}  \mathfrak{H}(z;\tau) = \mathfrak{H}(z;\tau) 
	\lrb{ \lrb{\sum_{k\ge 0}h_{k+1}(\tau)\frac{(2\pi iz)^{k}}{k!}}^{  2} -\sum_{k\ge 0}h_{k+2}(\tau)\frac{(2\pi iz)^{k}}{k!}   } .\label{eq:H32-zeta-action}
\end{align}
Plugging \eqref{eq:H32-q-action}, \eqref{eq:H32-zeta-action}, and \eqref{eq:H32-on-mathcalH-mid-step} into Lemma~\ref{lem:H-3/2-on-mathcalH} then gives 
\begin{align*}
	H_{\frac{3}{2}}\left(\mathfrak{H}(z;\tau) \right)&=  \lrb{6q\frac{\partial}{\partial q} \Log\lrb{\eta(\tau)}-6\sum_{k\ge 1}q\frac{\partial}{	\partial q}h_k(\tau)\frac{(2\pi iz)^k}{k!} +\sum_{k\ge 0}h_{k+2}(\tau)\frac{(2\pi iz)^{k}}{k!} \right.\\
	&\hspace{5.5cm}\left.- \lrb{\sum_{k\ge 0}h_{k+1}(\tau)\frac{(2\pi iz)^{k}}{k!}}^{ 2}}\mathfrak{H}(z;\tau)=0.
\end{align*}
Since $\mathfrak H(z;\tau)\neq 0$,
by comparing the coefficient of $(2\pi iz)^k$ for $k\in\N$, we get 
\begin{align}
	6q\frac{\partial}{\partial q}h_k(\tau)&= h_{k+2}(\tau) -\sum_{d=0}^k \binom{k}{d} h_{d+1}(\tau)h_{k-d+1}(\tau).\label{eq:D-h-1-recur}
\end{align}
Furthermore, comparing the constant term and using \eqref{eq:der-eta-as-G-2} gives
\begin{align}\label{eq:h-2-h-1-G-2}
	-G_2(\tau) = q\frac{\partial}{\partial q} \Log\lrb{\eta(\tau)} &= -\frac{h_2(\tau)}{6} + \frac{h_1^2(\tau)}{6}.
\end{align}

It remains to show that $\mathcal{H} =\widetilde{\mathcal M}[h_1,h_3,h_5,\ldots]$. Using \eqref{eq:h-2-h-1-G-2}, \eqref{eq:D-h-1-recur}, and \eqref{eq:der-of-Eis}, we have $G_2,G_4,G_6\in\mathcal{H}=\C[h_1,h_2,\ldots]$.
To finish the proof, we need to show that $h_k$ for $k$ even can be written as a polynomial in $G_2,G_4,G_6$, and $h_j$ with $j$ odd. This follows inductively by differentiating \eqref{eq:h-2-h-1-G-2} and using \eqref{eq:D-h-1-recur} and the fact that the space of quasimodular forms is closed under differentiating. 
\end{proof}

\section{Proofs of Theorems~\ref{thm:Fourier-exp-of-g_k} and~\ref{thm:Fourier-exp-of-h_k}}\label{sec:Fourier-exp}
In this section, we give explicit formulas for the Fourier coefficients of $g_k$ and $h_k$.

\subsection{Constant terms} We first compute the constant terms of $g_k$ and $h_k$.
\begin{lemma}\label{lem:thlimit}
	We have
	\begin{align*}
	\lim_{\tau\to i\infty} g_k(\tau) = \lim_{\tau\to i\infty}h_k(\tau) = -\frac{\delta_{k,2}}{2}.
	\end{align*}
\end{lemma}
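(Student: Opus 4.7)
The plan is to read off both limits from the defining identities~\eqref{def:mathcalT-as-exp} and~\eqref{def:mathcalH-as-exp} by inspecting the leading behavior of the defining theta-like functions as $q\to 0$. In both cases, the ratio of the right-hand side to the ``polar'' prefactor on the left-hand side tends to $\zeta^{1/2}=e^{\pi iz}$, so after taking the principal logarithm only the linear-in-$z$ coefficient survives and all higher coefficients vanish.

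For $g_k$, I would retain only the $n=0$ summand of the series~\eqref{def:mathcalT}, which gives
\[
T(z;\tau) = 2iq^{\frac{1}{8}}\zeta^{\frac{1}{2}}\bigl(1+O(q)\bigr), \qquad T_0(\tau) = 2iq^{\frac{1}{8}}\bigl(1+O(q)\bigr),
\]
uniformly for $z$ in a compact neighborhood of $0$. Substituting into~\eqref{def:mathcalT-as-exp} and taking the principal logarithm yields
\[
-\sum_{k\ge 1}g_k(\tau)\frac{(2\pi iz)^k}{k!} = \log\frac{T(z;\tau)}{T_0(\tau)} = \pi iz + O(q).
\]
Matching coefficients of $(2\pi iz)^k/k!$ on both sides and letting $q\to 0$ then determines each limit $\lim_{\tau\to i\infty}g_k(\tau)$.

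For $h_k$ the same idea works. From the elementary identity $-2i\sin(\pi z) = \zeta^{-\frac{1}{2}}(1-\zeta)$, the expansion $q^{-\frac{1}{24}}\eta(\tau) = (q)_\infty = 1+O(q)$, and the $n=0$ summand of~\eqref{eq:def-G-H}, which contributes $h(\zeta;q) = (1-\zeta)(1+O(q))$, I obtain
\[
\exp\!\left(-\sum_{k\ge 1}h_k(\tau)\frac{(2\pi iz)^k}{k!}\right) = \frac{h(\zeta;q)}{-2i\sin(\pi z)q^{-\frac{1}{24}}\eta(\tau)} = \zeta^{\frac{1}{2}}\bigl(1+O(q)\bigr),
\]
and the previous argument applies verbatim.

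There is no genuine obstacle: the proof is a routine $q$-expansion calculation. The only care needed is the interchange of the $\tau\to i\infty$ limit with the Taylor expansion in $z$, which is justified by the local uniform convergence of the underlying $q$-series near $z=0$.
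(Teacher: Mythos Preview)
Your proof is correct and takes essentially the same approach as the paper: both pass to the $q\to 0$ limit of the defining ratios, obtain $\zeta^{1/2}$, take the logarithm, and compare Taylor coefficients in $z$. Note that your argument (like the paper's own proof) actually yields $-\tfrac{\delta_{k,1}}{2}$, which is what is used in Theorems~\ref{thm:Fourier-exp-of-g_k} and~\ref{thm:Fourier-exp-of-h_k} and matches the numerical data in Section~\ref{sec:examples}; the $\delta_{k,2}$ in the displayed statement is evidently a typo.
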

\begin{proof}
From \eqref{def:mathcalT-as-exp} and \eqref{def:mathcalH-as-exp}, we have
\begin{equation*}
	\frac{T(z;\tau)}{T_0(\tau)} = \exp\lrb{-\sum_{k\ge 1}g_k(\tau)\frac{(2\pi iz)^k}{k!}},\qquad
	\frac{iq^{\frac{1}{24}}h(\z;q)}{2\sin(\pi z)\eta(\tau)} =  \exp\lrb{-\sum_{k\ge 1}h_k(\tau)\frac{(2\pi iz)^k}{k!}}.
\end{equation*}
Using \eqref{def:mathcalT} and \eqref{eq:def-G-H}, note that 
\begin{align*}
	T(z;\tau) = 2i\z^{\frac12}q^{\frac18}(1+O_\z(q)),\qquad h(\z;q)=1-\z + O_\z(q),\qquad q^{-\frac{1}{24}}\eta(\tau)=1+O(q).
\end{align*}
Hence we have
\begin{align*}
	\exp\lrb{-\sum_{k\ge 1} \lim_{\tau\to i\infty}g_k(\tau)\frac{(2\pi iz)^k}{k!}}&= \lim_{\tau\to i\infty} \frac{T(z;\tau)}{T_0(\tau)}=\z^{\frac12}=\exp\lrb{\frac12\frac{2\pi i z}{1!}},\\
	\exp\lrb{-\sum_{k\ge 1} \lim_{\tau\to i\infty}h_k(\tau)\frac{(2\pi iz)^k}{k!}} &= \lim_{\tau\to i\infty}\frac{ih(\z;q)}{2\sin(\pi z)q^{-\frac{1}{24}}\eta(\tau)} = \frac{\z-1}{2 i \sin(\pi z)}=
	\z^{\frac{1}{2}}=\exp\lrb{\frac{1}{2} \frac{2\pi i z}{1!}}.
\end{align*}
Taking the logarithm of these and comparing coefficients gives the lemma.
\end{proof}

\subsection{Proof of Theorem~\ref{thm:Fourier-exp-of-g_k}}\hspace{0cm} 
\begin{proof}[Proof of Theorem~\ref{thm:Fourier-exp-of-g_k}]
From \eqref{def:mathcalT-as-exp}, we have, 
\begin{align}
	\sum_{k\ge 0}g_k(\tau)\frac{(2\pi iz)^k}{k!}=-\Log\lrb{\mathtt{T}(\z;q)}\label{eq:Log-of-MathcalT}
\end{align}
with $g_0(\tau):=-\Log\lrb{\mathtt{T}(1;q)}$ and 
\begin{align}\label{T def}
	\mathtt{T}(\z;q):= \frac{T(z;\tau)}{2iq^{\frac18}}= \sum_{n\ge0} (-1)^n \zeta^{n+\frac{1}{2}} q^{\frac{n(n+1)}{2}}.
\end{align}

Applying \eqref{eq:Fa-di-bruno} to $f(q)=\Log(q)$ and  $g(q)=\mathtt{T}(\zeta;q)$, we obtain, for $n\in \N$, that
\begin{align*}
	\hspace{-0.15cm}{\partial^{n} \over \partial q^{n}}\Log (\mathtt{T}(\zeta;q))
	&=\sum_{\lambda \vdash n} \frac{n!\big(\sum_{r=1}^n m_r -1\big)!(-1)^{\sum_{r=1}^n m_r +1}}{m_{1}!m_{2}!\cdots m_{n}!\mathtt{T}(\z;q)^{\sum_{r=1}^n m_r}}  \prod _{j=1}^{n}\left(\frac{\frac{\partial^j}{\partial q^j}\mathtt{T}(\zeta;q)}{j!}\right)^{m_{j}}   .
\end{align*}
Note that from \eqref{T def} $\mathtt{T}(\zeta;0)=\zeta^\frac12$. Dividing by $n!$ and inserting the definition of $\mathtt{T}$, we obtain 
\begin{multline*}
	\operatorname{coeff}_{[q^n]}\Log(\mathtt{T}(\zeta;q)) \\
	=\lim\limits_{q\to 0^+}\sum_{\lambda \vdash n} {\frac {(-1)^{\sum_{r=1}^n m_r +1}}{\sum_{j=1}^n m_j}}\zeta^{-\frac12\sum_{r=1}^nm_r}\binom{\sum_{r=1}^n m_r }{m_1,m_2,\ldots,m_n} \prod _{j=1}^{n}\left(\frac{\frac{\partial^j}{\partial q^j}\sum_{n\ge0} (-1)^n \zeta^{n+\frac{1}{2}} q^{\frac{n(n+1)}{2}}}{j!}\right)^{m_{j}}.
\end{multline*}
Let $T_\ell:=\frac{\ell(\ell+1)}{2}$ be the \emph{$\ell$-th triangular number} and denote by $\mathscr{T}(n)$ the set of all partitions of $n$ into triangular numbers. For a partition $\lambda \in \mathscr{T}(n)$, we write $\lambda = ( T_1^{m_{T_1}}, T_2^{m_{T_2}},\ldots)$. Noting that $\sum_{r=1}^n m_r = \sum_{\ell\ge1} m_{T_\ell}$, we have 
\begin{align}
	\operatorname{coeff}_{[q^n]}\Log(\mathtt{T}(\zeta;q))& =\sum_{\lambda \in \mathscr{T}(n)} \frac {(-1)^{\sum_{\ell\ge1} m_{T_\ell} +1}\zeta^{-\frac12\sum_{\ell\ge1} m_{T_\ell}}}{\sum_{\ell\ge1} m_{T_\ell}} \binom{\sum_{\ell\ge1} m_{T_\ell} }{m_{T_1},m_{T_2},\ldots}
	 \prod_{\ell\ge1}\left((-1)^\ell \zeta^{\ell+\frac{1}{2}}\right)^{m_{T_\ell}} \nonumber\\
	&=\sum_{\lambda \in \mathscr{T}(n)} \frac {(-1)^{\sum_{\ell\ge1} m_{T_\ell} +1}}{\sum_{\ell\ge1} m_{T_\ell}}\binom{\sum_{\ell\ge1} m_{T_\ell} }{m_{T_1},m_{T_2},\ldots} \left(-\zeta\right)^{\sum_{\ell\ge1}\ell m_{T_\ell}} .\label{coeff}
\end{align}
Let $\mathscr{G}(n)$ be the set of partitions with non-increasing multiplicities i.e., $\lambda=(1^{m_1},2^{m_2},\ldots,n^{m_n})\vdash n$ with $m_1\geq m_2\geq \ldots \geq m_n \geq 0$. We define a bijection $\mathscr{T}(n)\to \mathscr{G}(n)$
\begin{align*}
	\left( T_1^{m_{T_1}}, T_2^{m_{T_2}},T_3^{m_{T_3}},\ldots \right)\mapsto \left( 1^{\ell(\lambda)},2^{\ell(\lambda)-m_{T_1}},3^{\ell(\lambda)-m_{T_1}-m_{T_2}},\ldots \right)
\end{align*}
by writing each part $T_\ell$ of $\lambda$ as  $T_\ell=\sum_{j=1}^{\ell} j$.
The inverse map is given by 
\begin{align*}
	&\left( 1^{m_1},2^{m_2},\ldots, n^{m_n} \right) \mapsto \left( T_1^{m_1-m_2}, T_2^{m_2-m_3},\ldots\right).
\end{align*}
 Hence, \eqref{coeff} simplifies as 
\begin{align}\label{eq:Tqcoef}
	\operatorname{coeff}_{[q^n]}\Log(\mathtt{T}(\zeta;q)) =\sum_{\lambda\in \mathscr{G}(n)} {\frac {(-1)^{m_1 +1}}{m_{1}}}\binom{m_1 }{m_1-m_2,m_2-m_3,\ldots} (-\zeta)^{\sum_{j\ge1}m_j} .
\end{align}
Using \eqref{eq:Log-of-MathcalT}, Lemma~\ref{lem:thlimit}, and \eqref{eq:Tqcoef}, we have 
\begin{align*}
g_k(\tau) 
&= -\frac{\delta_{k,1}}{2}-{k!}\operatorname{coeff}_{\left[(2\pi iz)^k\right]}\sum_{n\geq 1}\sum_{\lambda\in \mathscr{G}(n)} {\frac {(-1)^{m_1 +1}}{m_{1}}}\binom{m_1 }{m_1-m_2,m_2-m_3,\ldots} \left(-\zeta\right)^{\sum_{j\ge1} m_j} q^n\\
&=-\frac{\delta_{k,1}}{2}+\sum_{n\geq 1} \sum_{\lambda\in \mathscr{G}(n)} {\frac {(-1)^{\sum_{j\ge2} m_j}}{m_{1}}}\binom{m_1 }{m_1-m_2,m_2-m_3,\ldots} \sum_{j\ge1}m_j \left(\sum_{j\ge1} m_j\right)^{k-1} q^{n}.
\end{align*}
Note that, using Lemma~\ref{lem:div}, we have
\[
{\frac {(-1)^{\sum_{j\ge2}m_j}}{m_{1}}}  \sum_{j\ge1}m_j  \binom{m_1 }{m_1-m_2,m_2-m_3,\ldots}\in\Z. 
\]
Hence, we have, with $a_{n,m}\in \mathbb{Z}$ as in the theorem, 
\begin{align*}
g_k(\tau) = {-\frac{\delta_{k,1}}{2}}+\sum_{n\ge1} \sum_{m=1}^n a_{n,m} {m^{k-1}} q^n.
\end{align*} 

We claim that the conditions in $\Lambda(n,m)$ give
\begin{equation*}
m=\sum_{j=1}^n m_j \ge \frac{1}{2}\left(\sqrt{8n+1}-1\right).
\end{equation*}
Indeed, let $\ell\in\N$ be the unique integer such that $T_\ell\leq n<T_{\ell+1}$. 
Recall the constraints
$
m_1 \ge m_2 \ge \cdots \ge m_n \ge 0$, and $\sum_{j=1}^n j m_j = n. 
$
Hence, if $m_{\ell+1}\geq 1$, we find $m_1, \ldots,m_{\ell+1}\geq 1$ and therefore $n=\sum_{j=1}^n j m_j \geq \sum_{j=1}^{\ell+1} j = T_{\ell+1}>n$, a contradiction. We conclude that $m_{\ell+1}=0$. Furthermore, we also have that $0\le m_\ell\le 1$, since otherwise $m_1\ge m_2\ge \cdots\ge m_\ell\ge2$ which gives $n=\sum_{j=1}^\ell jm_j\ge 2\sum_{j=1}^\ell j=2T_\ell$, again a contradiction. 
Using Lemma~\ref{lem:ineq} with $n_j=m_j$, $a_j=j,$ and $b_j=\frac{\ell}{2}$ for $j\in\{1,\ldots,\ell-1\}$ as well as $m_\ell\leq 1$, we find that
\begin{align}
n& 
\le \frac{\ell}{2}m +\frac{\ell}{2}, \qquad \text{or equivalently} \qquad
\label{eq:m+1}
	m+1\geq \frac{2n}{\ell}
\end{align}
As $T_\ell\leq n$ (and $\ell>0$), we find $\ell \leq \frac{1}{2}(\sqrt{8n+1}-1)$.
Plugging this into \eqref{eq:m+1} gives $m\ge \frac{1}{2} (\sqrt{8 n+1}-1)$.
Thus $a_{n,m}=0$ unless $\frac{1}{2}(\sqrt{8n+1}-1) \le m \le n$.
This proves the theorem. 
\end{proof}

\subsection{Proof of Theorem~\ref{thm:Fourier-exp-of-h_k}}
\begin{proof}[Proof of Theorem~\ref{thm:Fourier-exp-of-h_k}] 
Using \eqref{def:mathcalH-as-exp}, we have
\begin{align}
	\sum_{k\ge 0}h_k(\tau)\frac{(2\pi iz)^k}{k!} &= -\Log\lrb{\mathtt{H}(\z;q)}\label{eq:mathcal-H-mid-2}
\end{align}
with $h_0(\tau):=-\Log(q^{-\frac{1}{24}}\eta(\tau))$, and 
\begin{align}\label{def:mathttH}
	\mathtt{H}(\z;q):= \frac{ih(\z;q)}{2\sin(\pi z)}.
\end{align}
Applying \eqref{eq:Fa-di-bruno} with $f(q)=\Log(q)$ and $g(q)=\mathtt{H}(\z;q)$, we obtain, for $n\in\N_0$,
\begin{equation}
	{\partial^{n} \over \partial q^{n}}\Log\lrb{\mathtt{H}(\z;q)} =\sum_{\lambda \vdash n} \frac {(-1)^{\sum_{r=1}^n m_r +1}n! \big(\sum_{r=1}^n m_r -1\big)!}{m_{1}!  m_{2}!  \cdots   m_{n}! \mathtt{H}(\z;q)^{\sum_{r=1}^n m_r}}
	 \prod _{j=1}^{n}\left(\frac{\frac{\partial^j}{\partial q^j}\mathtt{H}(\z;q)}{j!}\right)^{m_{j}} .\label{eq:n-th-der-wrt-q-log-mathcalH}
\end{equation}
Using \eqref{eq:def-G-H} and \eqref{def:mathttH}, we get
\begin{align*}
	\mathtt{H}(\z;q)&= \zeta^{\frac12} \sum_{n\ge0} (-1)^n \zeta^{3n} q^{\frac{n(3n+1)}{2}} + \zeta^{-\frac12}\sum_{n\ge1} (-1)^n \zeta^{3n} q^{\frac{n(3n-1)}{2}}.
\end{align*}
From this, we obtain that
\begin{align*}
	\left[\frac{\partial^j}{\partial q^j}\mathtt{H}(\z;q)\right]_{q=0} &= \begin{cases}
								\zeta^{\frac12} & \text{if }j=0,\\[.4em]
								(-1)^{\ell}\z^{3\ell-\frac12} P_{\ell}! &\text{if } j=P_{\ell} \text{ with }\ell\in\N,\\[.4em]
								(-1)^{\ell}\z^{3\ell+\frac{1}{2}} P_{-\ell}! & \text{if } j=P_{-\ell} \text{ with } \ell\in\N,\\[.4em]
								0 &\text{otherwise},
							\end{cases}
\end{align*}
where, for $\ell\in \Z\setminus\{0\}$, we let $P_{\ell}:=\tfrac{\ell(3\ell- 1)}{2}$ be the \emph{$\ell$--th generalized pentagonal number}. Moreover, denote by $\mathscr{P}(n)$ the set of partitions of $n$ into generalized pentagonal numbers. We write a partition $\lambda\in \mathscr{P}(n)$ as $\lambda=(P_{1}^{m_{P_{1}}},P_{-1}^{m_{P_{-1}}},P_{2}^{m_{P_{2}}},P_{-2}^{m_{P_{-2}}},\ldots)$. Note that we have $\sum_{r=1}^n m_r=\sum_{\ell\ge1} (m_{P_\ell}+m_{P_{-\ell}})$. With this notation, \eqref{eq:n-th-der-wrt-q-log-mathcalH} becomes 
\begin{align}
	\operatorname{coeff}_{[q^n]}\Log\lrb{\mathtt{H}(\z;q)} &=\sum_{\lambda \in \mathscr{P}(n)} \frac {(-1)^{\sum_{\ell\ge1} \left(m_{P_\ell}+m_{P_{-\ell}}\right) +1}}{\sum_{\ell\ge1} \left(m_{P_\ell}+m_{P_{-\ell}}\right)}\binom{\sum_{\ell\ge1} \left(m_{P_\ell}+m_{P_{-\ell}}\right) }{m_{P_1},m_{P_{-1}},m_{P_2},m_{P_{-2}},\ldots}\label{eq:h-n-q-coeff}\\
	&\qquad\times\z^{-\frac{1}{2}\sum_{\ell\ge1} \left(m_{P_\ell}+m_{P_{-\ell}}\right)} \prod _{\ell\ge1}\left((-1)^{\ell}\zeta^{3\ell-\frac12}\right)^{m_{P_{\ell}}}
	 \prod _{\ell\ge1} \left((-1)^{\ell} \zeta^{3\ell+\frac12}\right)^{m_{P_{-\ell}}} \nonumber\\
	 &\hspace{-3cm}=\sum_{\lambda \in \mathscr{P}(n)} \frac {(-1)^{\sum_{\ell\ge1}(\ell+1) \lrb{m_{P_{\ell}}+m_{P_{-\ell}}}+1}}{\sum_{\ell\ge1} (m_{P_\ell}+m_{P_{-\ell}})}\binom{\sum_{\ell\ge1} \left(m_{P_\ell}+m_{P_{-\ell}}\right) }{m_{P_1},m_{P_{-1}},m_{P_2},m_{P_{-2}},\ldots} \z^{3\sum_{\ell\ge1} \ell\lrb{m_{P_{\ell}}+m_{P_{-\ell}}} - \sum_{\ell\ge1} m_{P_{\ell}}} .\nonumber
\end{align}

Next, denote by $\mathscr{H}(n)$ the set of partitions of $n$ having no parts that are multiple of three and such that $m_{3j-2}\ge m_{3j+1}$ and $m_{3j-1}\ge m_{3j+2}$ for $j\geq 1$. Define a bijection $\mathscr{P}(n)\to \mathscr{H}(n)$ by 
\[
	\left(P_{1}^{m_{P_{1}}},P_{-1}^{m_{P_{-1}}},P_{2}^{m_{P_{2}}},P_{-2}^{m_{P_{-2}}},\ldots\right)
	\mapsto \left( 1^{\sum_{\ell\ge1} m_{P_\ell}}, 2^{\sum_{\ell\ge1} m_{P_{-\ell}}}, 4^{\sum_{\ell\geq 2} m_{P_\ell}}, 5^{\sum_{\ell\geq 2} m_{P_{-\ell}}},\ldots \right),
\]
where we write for each part $P_{\ell}=\sum_{m=1}^\ell (3m-2)$ and $P_{-\ell}=\sum_{m=1}^\ell (3m-1)$ for $1\le \ell\le n$. The inverse map is given by
\begin{align*}
	\left( 1^{m_1},2^{m_2},4^{m_4},5^{m_5},\ldots \right) \mapsto \left( P_1^{m_1-m_4}, P_{-1}^{m_2-m_5}, P_{2}^{m_4-m_7}, P_{-2}^{m_5-m_8},\ldots \right).
\end{align*}
Therefore \eqref{eq:h-n-q-coeff} becomes
\begin{align}
	&\operatorname{coeff}_{[q^n]}\Log\lrb{\mathtt{H}(\z;q)} 
	= \sum_{\lambda\in \mathscr{H}(n)} \frac{(-1)^{\sum_{\ell\ge1} (m_{3\ell-1}+m_{3\ell-2}) + m_1+m_2+1}}{m_1+m_2}
	\binom{m_1+m_2}{m_1-m_4,m_2-m_5,\ldots}\nonumber\\&\hspace{10cm}\times\zeta^{3\sum_{\ell\ge1} (m_{3\ell-1}+m_{3\ell-2})-m_1} .\label{eq:log-mathcalH-mid}
\end{align}
By Lemma~\ref{lem:thlimit}, \eqref{eq:mathcal-H-mid-2}, and \eqref{eq:log-mathcalH-mid}, we have, for $k\in\N$,
\begin{align*}
h_k(\tau)
	&= -\frac{\delta_{k,1}}{2}+ \sum_{n\geq 1} \sum_{\lambda\in \mathscr{H}(n)} \!(-1)^{\sum_{\ell\ge2} (m_{3\ell-1}+m_{3\ell-2})}\frac{3\sum_{\ell\ge1} (m_{3\ell-1}+m_{3\ell-2})-m_1}{m_1\!+\!m_2}\binom{m_1+m_2}{m_1-m_4,m_2-m_5,\ldots} \\
	& \hspace{9cm}\times\lrb{3\sum_{\ell\ge1} (m_{3\ell-1}+m_{3\ell-2})-m_1}^{\!k-1}\! q^n.
\end{align*}
Now let $d:=\gcd(m_1-m_4,m_2-m_5,\ldots)$.  By \cite[Lemma~2.8]{BPvI25}, we have $\frac{m_1+m_2}{d}|\binom{m_1+m_2}{m_1-m_4,m_2-m_5,\ldots}$.
Moreover, note that $d$ divides
\begin{multline*}
\hspace{-0.3cm}3\lrb{m_1-m_4+2(m_4-m_7)+3(m_7-m_{10})+\ldots}+3\lrb{m_2-m_5+2(m_5-m_8)+3(m_8-m_{11})+\ldots}\\- \lrb{m_1-m_4+m_4-m_7+\ldots} = 3\sum_{\ell\ge1} (m_{3\ell-1}+m_{3\ell-2})-m_1.
\end{multline*}
Thus, $m_1+m_2=\frac{m_1+m_2}{d}d$ divides $(3\sum_{\ell\ge1} (m_{3\ell-1}+m_{3\ell-2})-m_1)\binom{m_1+m_2}{m_1-m_4,m_2-m_5,\ldots}$. Hence we get
\begin{align*}
	h_k(\tau) &= -\frac{\delta_{k,1}}{2} + \sum_{n,m\geq 1} b_{n,m} m^{k-1} q^n,
\end{align*}
where $b_{n,m}\in \Z$ is defined as in the theorem.

To finish the proof, we have to show that 
\begin{align}
	\left\lfloor \frac{1}{2} \left(\sqrt{24 n+1}-1\right) \right\rfloor \le m\le 2n. \label{eq:bound-on-m}
\end{align}
The definition of $\Omega(n,m)$ directly gives that $m\le 2n$.
Next we show the lower bound for $m$. It is not hard to see that for fixed $n$, there exists a unique $\ell\in\N_0$ such that either $P_{-\ell}\le n<P_{\ell+1}$ (which we refer to as (1)) or $P_{\ell+1}\le n<P_{-\ell-1}$ (which we refer to as (2)).
Recall that (by the condition in $\Omega(n,m)$) $m_1\geq m_4\geq m_7 \geq \ldots\geq 0$, $m_2\geq m_5\geq m_8 \geq \ldots\geq 0$ and $\sum_{j=1}^n jm_j=n$. If $m_{3\ell+4}\ge 1$, then 
\[
n=\sum_{j=1}^n jm_j\ge \sum_{j=1}^{\ell+2} (3j-2)m_{3j-2}\ge \sum_{j=1}^{\ell+2} (3j-2) =P_{\ell+2}>P_{-\ell-1}>n,
\]
a contradiction. Similarly, if $m_{3\ell+2}\ge 1$, then 
\[
n=\sum_{j=1}^n jm_j\ge \sum_{j=1}^{\ell+1} (3j-1)m_{3j-1}\ge \sum_{j=1}^{\ell+1} (3j-1)=P_{-\ell-1}>n, 
\] 
again a contradiction. Finally, if $m_{3\ell+1}\ge 1$ then 
\[
n=\sum_{j=1}^n jm_j\ge \sum_{j=1}^{\ell+1} (3j-2)m_{3j-2}\ge \sum_{j=1}^{\ell} (3j-2)=P_{\ell+1},
\] 
which is a contradiction if we are in case~(1). 
This gives that $m_j=0$ for all $j\ge 3\ell+2$ in case (2) and for all $j\geq 3\ell+1$ in case (1). 
Applying Lemma~\ref{lem:ineq} with $n_j=m_{3j-1}, a_j=3j-1$, and $b_j=\frac{3\ell+1}{2}$, we obtain
\begin{align}\label{eq:2mod3}
\sum_{j=1}^{\ell} (3j-1)m_{3j-1} \leq  \frac{3\ell+1}{2}\sum_{j=1}^\ell m_{3j-1}.
\end{align}
Similarly applying Lemma~\ref{lem:ineq} with $r=\ell+1$, $n_j=m_{3j-2}$, $a_j=3j-2$, and $b_j=\frac{\ell}{2}(3-\delta_{j,1})$  (resp.\ $b_j = \frac{\ell+1}{2}(3-\delta_{j,1})$) if we are in case (1) (resp.\ (2)), we get
\begin{align}\label{eq:1mod3}
\sum_{j=1}^{\ell+1} (3j-2)m_{3j-2} \leq \begin{cases} \frac{3\ell}{2}\left(\frac{2}{3}m_1+\sum_{j=2}^{\ell+1} m_{3j-2}\right) & \hspace{-0.2cm}\text{if we are in case (1)}, \\[10pt]
\frac{3\ell+3}{2}\left(\frac{2}{3}m_1+\sum_{j=2}^{\ell+1} m_{3j-2}\right) & \hspace{-0.2cm}\text{if we are in case (2)}.
\end{cases}
\end{align}
By combining \eqref{eq:2mod3} and \eqref{eq:1mod3}, we deduce
\begin{align}
n&=\sum_{j=1}^{3\ell+1} j m_j = \sum_{j=1}^{\ell} (3j-1)m_{3j-1} + \sum_{j=1}^{\ell+1} (3j-2)m_{3j-2} \leq\begin{cases}
\frac{(3\ell+1)m}{6} & \text{if we are in case (1)}, \\
\frac{(\ell+1)m}{2} & \text{if we are in case (2)}.
\end{cases} \label{eq:m-ineq}
\end{align}
If we are in case (1), then $\ell \leq  \tfrac{1}{6} (\sqrt{24 n+1}-1)$. Similarly, in case (2), $P_{\ell+1}\le \tfrac{1}{6} (\sqrt{24 n+1}-5)$. Using these bounds on $\ell$ and \eqref{eq:m-ineq}, we find the lower bound in \eqref{eq:bound-on-m}, proving the theorem.
\end{proof}

\section{Proof of Theorem~\ref{thm:recursive-formula-for-u-k}}\label{sec:unimodal}
In this section we prove Theorem~\ref{thm:recursive-formula-for-u-k}.

\begin{proof}[Proof of Theorem~\ref{thm:recursive-formula-for-u-k}]
By \cite[Proposition~2.1]{KL} and \eqref{def:Jacobi-theta-function}, we have
\begin{align}\label{eq:U-as-T-C-h}
	U(\zeta;q) &= \frac{i\sin(\pi z)\eta(\tau)}{q^{\frac{1}{24}}\vartheta(z;\tau)}    T(2z;\tau) + h(\z;q) .
\end{align}
In particular, we have, using \eqref{eq:def-G-H} and the fact that $[\frac{\partial}{\partial z} \vartheta(z;\tau)]_{z=0} = -2\pi\eta^3(\tau)$,
\begin{align*}
	U(1;q) &= \frac{T_0(\tau)}{2iq^{\frac{1}{24}}\eta^2(\tau)}.
\end{align*}
Plugging this into \eqref{eq:U-as-exp} gives
\begin{align*}
	U(\zeta;q)&=\frac{\sin(\pi z) T_0(\tau)}{2\pi iz\eta^2(\tau)}\exp\lrb{ 2\sum_{k\ge 1} u_k(\tau) \frac{(2\pi i z)^k}{k!}}.
\end{align*}
Using this in \eqref{eq:U-as-T-C-h}, we obtain
\begin{align*}
	\frac{\sin(\pi z) T_0(\tau)}{2\pi iz\eta^2(\tau)} \exp \lrb{ 2\sum_{k\ge 1} u_k(\tau) \frac{(2\pi i z)^k}{k!}} &= \frac{i\sin(\pi z)\eta(\tau)}{q^{\frac{1}{24}}\vartheta(z;\tau)} T(2z;\tau) + h(\z;q).
\end{align*}
Equivalently, using \eqref{eq:vartheta-as-Eis}, \eqref{def:mathcalT-as-exp}, and \eqref{def:mathcalH-as-exp}, we have
\begin{multline*}
	 \exp\lrb{ 2\sum_{k\ge 1} u_k(\tau) \frac{(2\pi i z)^k}{k!}} \\
	= \exp\lrb{\sum_{k\ge 1} \left(2G_k(\tau)-2^k g_k(\tau)\right)\frac{(2\pi iz)^k}{k!}} + \frac{4\pi z \eta^3(\tau)}{T_0(\tau)}\exp\lrb{-\sum_{k\ge 1} h_k(\tau)\frac{(2\pi iz)^k}{k!}}.
\end{multline*}
Using this identity and Lemma~\ref{lem:cycle-index}, we obtain 
\begin{multline*}
	\sum_{{k\ge0}}\sum_{\lambda\vdash k}\prod_{j=1}^k\frac{\left(\frac{2u_j(\tau)}{j!}\right)^{m_j}}{m_j!}(2\pi iz)^k \\
	= \sum_{{k\ge0}}\sum_{\lambda\vdash k}\prod_{j=1}^k\frac{\left(\frac{2G_j(\tau)-2^j g_j(\tau)}{j!}\right)^{m_j}}{m_j!} (2\pi iz)^k +  \frac{4\pi z\eta^3(\tau)}{T_0(\tau)} \sum_{{k\ge0}}\sum_{\lambda\vdash k}\prod_{j=1}^k\frac{\left(\frac{-h_j(\tau)}{j!}\right)^{m_j}}{m_j!} (2\pi iz)^k.
\end{multline*}
In the language of partition traces, we have
\begin{align*}
	\sum_{k\ge 0}\mathrm{Tr}_k(\phi,u;\tau) (2\pi iz)^k = \sum_{k\ge 0} \mathrm{Tr}_k(\phi,\gamma;\tau) (2\pi iz)^k - \frac{2i \eta^3(\tau)}{T_0(\tau)} \sum_{k\ge 1} \mathrm{Tr}_{k-1}(\psi,h;\tau)(2\pi iz)^k.
\end{align*}
Separating the partition $(k^1)$ from the rest, this yields
\begin{align}
	\frac{2u_k(\tau)}{k!} &= -  \sum_{\substack{\lambda\vdash k\\ \lambda\neq \left(k^1\right)}} \phi(\lambda) u_\lambda(\tau)+ \mathrm{Tr}_k(\phi,\gamma;\tau) - \frac{2i\eta^3(\tau)}{T_0(\tau)} \mathrm{Tr}_{k-1}(\psi,h;\tau).\label{eq:u-n-as-trace-mid}
\end{align}
Using \eqref{def:mathcalT-as-exp}, we obtain that
\begin{align}\label{eq:g-1-as-eta-T-0}
	g_1(\tau) &= -\frac{1}{2\pi i} \frac{\left[\frac{\partial}{\partial z}T(z;\tau)\right]_{z=0}}{T_0(\tau)}=-\frac{i\eta^3(\tau)}{T_0(\tau)}.
\end{align}
Plugging this into \eqref{eq:u-n-as-trace-mid} gives \eqref{eq:u_k}.

Note that the components of $\gamma=\{G_k-2^{k-1}g_k\}_{k\in\N}$ and $h=\{h_k\}_{k\in\N}$ belong to $\widetilde{\mathcal{M}}[g_1,g_2,\ldots,h_1,$ $h_2,\ldots]$.
Moreover, since $z \mapsto U(e^{2\pi iz};q)$ is even, we have $u_k=0$ for $k$ odd by \eqref{eq:U-as-exp}. In particular $u_1=0$. Using this and \eqref{eq:u_k} inductively we have $u_k\in \widetilde{\mathcal{M}}[g_1,g_2,\ldots,h_1,h_2,\ldots].$ This space is closed under the action of $D$ by Theorems~\ref{thm:mathbbT-algebra} and~\ref{thm:h-n-diff}. 
Next we show that $\mathcal{U} = \widetilde{\mathcal{M}}[g_1,g_2,\ldots,h_1,h_2,\ldots]$. By definition  $\mathcal{U}\subseteq\widetilde{\mathcal{M}}[g_1,g_2,\ldots,h_1,h_2,\ldots]$. For the other containment, we need to show that $g_k\in\mathcal{U}$ for $k\ge 3$ odd. To see this observe, using \eqref{eq:g-1-as-eta-T-0}, \eqref{eq:der-eta-as-G-2}, and Theorem~\ref{thm:mathbbT-algebra}, that 
\begin{equation*}
	\frac{1}{g_1}D(g_1)=\frac{1}{\frac{\eta^3}{T_0}} D\lrb{\frac{\eta^3}{T_0}} = D\left(\Log\lrb{\frac{\eta^3}{T_0}}\right) = 3D(\Log(\eta))-D(\Log(T_0))=-3G_2 + \frac{g_2}{2} -\frac{g_1^2}{2}.
\end{equation*}
The claim then follows by induction, using Theorem~\ref{thm:mathbbT-algebra}. This completes the proof.
\end{proof}

\section{Examples} \label{sec:examples}
We have 
\begin{align*}
g_{1}(\tau) &= -\frac{1}{2} + q + q^{2} - q^{3} - 2 q^{4} - 3 q^{5} + q^{6} + 4 q^{7} + 8 q^{8} + O\!\left(q^{9}\right)\!, \\
g_{2}(\tau) &= q + 2 q^{2} - q^{3} - 5 q^{4} - 11 q^{5} - 2 q^{6} + 12 q^{7} + 38 q^{8} + O\!\left(q^{9}\right)\!, \\
g_{3}(\tau) &= q + 4 q^{2} + q^{3} - 11 q^{4} - 39 q^{5} - 30 q^{6} + 22 q^{7} + 170 q^{8} + O\!\left(q^{9}\right)\!, \\
g_{4}(\tau) &= q + 8 q^{2} + 11 q^{3} - 17 q^{4} - 131 q^{5} - 200 q^{6} - 72 q^{7} + 680 q^{8} + O\!\left(q^{9}\right)\!, \\
g_{5}(\tau) &= q + 16 q^{2} + 49 q^{3} + 13 q^{4} - 399 q^{5} - 1074 q^{6} - 1226 q^{7} + 2078 q^{8} + O\!\left(q^{9}\right) , \\
g_{6}(\tau) &= q + 32 q^{2} + 179 q^{3} + 295 q^{4} - 971 q^{5} - 5072 q^{6} - 10128 q^{7} + 728 q^{8} + O\!\left(q^{9}\right)
\intertext{and}
h_{1}(\tau) &= -\frac{1}{2} + 2 q + 5 q^{2} + 7 q^{3} + 12 q^{4} + 14 q^{5} + 24 q^{6} + 27 q^{7} + 42 q^{8} + O\!\left(q^{9}\right)\!, \\
h_{2}(\tau) &= 4 q + 17 q^{2} + 37 q^{3} + 83 q^{4} + 140 q^{5} + 273 q^{6} + 425 q^{7} + 736 q^{8} + O\!\left(q^{9}\right)\!, \\
h_{3}(\tau) &= 8 q + 59 q^{2} + 197 q^{3} + 579 q^{4} + 1316 q^{5} + 3019 q^{6} + 5919 q^{7} + 11730 q^{8} + O\!\left(q^{9}\right)\!, \\
h_{4}(\tau) &= 16 q + 209 q^{2} + 1057 q^{3} + 4073 q^{4} + 12032 q^{5} + 32883 q^{6} + 78209 q^{7} + 178426 q^{8} + O\!\left(q^{9}\right)\!, \\
h_{5}(\tau) &= 32 q + 755 q^{2} + 5717 q^{3} + 28887 q^{4} + 108692 q^{5} + 355399 q^{6} + 1007247 q^{7} + 2645982 q^{8} + O\!\left(q^{9}\right)\!, \\
h_{6}(\tau) &= 64 q + 2777 q^{2} + 31177 q^{3} + 206513 q^{4} + 977960 q^{5} + 3828723 q^{6} + 12805745 q^{7} + O\!\left(q^{8}\right)\!. 
\end{align*}

\section{Questions for future research} \label{sec:questions}
Here we raise some questions for future research.
\begin{enumerate}[leftmargin=*]
	\item By Proposition~\ref{prop:WidehatH-transform}, the function $\widehat{H}$ defined in \eqref{def:widehatMathcalH} transforms modular on $\Gamma_1(3)$. Note that it could also be viewed as a vector-valued Jacobi form on $\mathrm{SL}_2(\Z)$. It is thus natural to ask how the remaining components of the associated vector--valued Jacobi form can be described or characterized. Are they related to interesting combinatorial objects?
	\item Computational data suggests that the Fourier coefficients of $h_k$ are positive (except for $\operatorname{coeff}_{[q^0]}h_1$ $=-\tfrac{1}{2}$). This raises the question of whether these coefficients possess a combinatorial interpretation. Furthermore, the description of the Fourier coefficients in Theorem~\ref{thm:Fourier-exp-of-h_k} appears insufficient to establish positivity, since the quantities~$b_{n,m}$ are not always positive. Resolving this question might require a more refined analysis or a different approach.
	\item In \cite[equation (3.7)]{BKMN21}, analogues of the raising and lowering operators were introduced in the context of the completion of false objects. A natural question is to investigate their actions on $\widehat{T}$ and $\widehat{H}$, as well as their induced actions on $\widehat{g}_k$ and $\widehat{h}_k$.
	\item Can one utilize the raising and lowering operators mentioned in the previous question to study the action of heat operator on $\widehat{T}$ and $\widehat{H}$, or equivalently obtain a rank-crank type PDE for $\widehat{T}$ and $\widehat{H}$? This would enable us to understand the spaces generated by $\widehat{g}_k$ and $\widehat{h}_k$, respectively. 
\end{enumerate}

\end{document}